\newcommand{\wt}{\widetilde}
\newcommand{\wh}{\widehat}
\newcommand{\ol}{\overline}
\newtheorem{theorem}{Theorem}[section]
\newtheorem{lemma}{Lemma}[section]
\newtheorem{coro}{Corollary}[section]
\newtheorem{prop}{Proposition}[section]
\newtheorem{example}{Example}[section]
\newtheorem{remark}{Remark}[section]
\begin{document}

\title{Local Asymptotics of $P$-Spline Smoothing}

\author{Xiao  Wang,\footnote{Department of Statistics, Purdue University, West Lafayette, IN 47909,
U.S.A. Email: wangxiao@purdue.edu.} \ \ Jinglai
Shen,\footnote{Department of Mathematics and Statistics,
University of Maryland Baltimore County, Baltimore, MD 21250,
U.S.A. Email: shenj@umbc.edu.}\ \  and \ \  David
Ruppert\footnote{ School of Operations Research and Information
Engineering and Department of  Statistical Sciences, Cornell
University, Ithaca, NY 14853, U.S.A. Email: dr24@cornell.edu. }}

%
%
%
%
%
%

%
%

%

\maketitle
\begin{abstract}
This paper addresses asymptotic properties of general penalized
spline estimators with an arbitrary B-spline degree and an
arbitrary order difference penalty. The estimator is approximated
by a solution of a linear differential equation subject to
suitable boundary conditions. It is shown that, in certain sense,
the penalized smoothing corresponds approximately to smoothing by
the kernel method. The equivalent kernels for both inner points
and boundary points are obtained with the help of Green's
functions of the differential equation. Further, the asymptotic
normality is established for the estimator at interior points. It
is shown that the
 convergence rate is independent of the degree of the splines,
and the number of knots does not affect the asymptotic
distribution, provided that it tends to infinity fast enough.\\

\noindent{\it Key Words}: Difference penalty, equivalent kernel, Green's function, $P$-spline.
\end{abstract}

%
%
%
%

%
\section{Introduction}

Consider the problem of estimating the function $f:\mathbb [0, 1]
\rightarrow \mathbb R$ from a univariate regression model  $y_i =
f(t_i)+\epsilon_i$, $i=1, \ldots, n$, where the $t_i$ are
pre-specified design points and the $\epsilon_i$ are iid normal
random variables with mean $0$ and variance $\sigma^2$. This paper
presents a local asymptotic theory of penalized spline estimators
of $f$.

The penalized spline regression model with difference penalty was
introduced by Eilers and Marx (1996), who coined the term
``$P$-splines'', but using less knots for the regression problem
can be traced back at least to O'Sullivan (1986). Penalized spline
smoothing  has become popular over the last decade and the uses of
low rank bases lead to highly tractable computation. The
methodology and applications of $P$-splines are discussed
extensively in Ruppert, Wand and Carroll (2003). On the other
hand, asymptotic properties of the $P$-spline estimators are less
explored in the literature. A few exceptions include recent papers
such as Hall and Opsomer (2005), Li and Ruppert (2008), and
Claeskens, Krivobokova, and Opsomer (2009). Hall and Opsomer
(2005) placed knots continuously over a design set and established
consistency of the estimator. Li and Ruppert (2008) developed an
asymptotic theory of $P$-splines for piecewise constant and linear
B-splines with the first and second order difference penalties.
Claeskens, Krivobokova, and Opsomer (2009) studied bias, variance
and asymptotic rates of the $P$-spline estimator under different
choices of the number of knots and penalty parameters. An
interested reader may also refer to Pal and Woodroofe (2007), Shen
and Wang (2009), and Wang and Shen (2009) for shape constrained
regression estimators and their applications.

The $P$-spline model approximates the regression function
by $f^{[p]}(x) = \sum_{k=1}^{K_n+p} b_k B^{[p]}_k(x)$, where
$\big\{ B^{[p]}_k : k = 1, \ldots, K_n + p \, \big\}$ is the
$p\,$th degree B-spline basis with knots $0 = \kappa_0 < \kappa_1
< \cdots < \kappa_{K_n} = 1$. The value of $K_n$ will depend upon
$n$ as discussed below. The spline coefficients
$\hat{b}=\{\hat{b}_k, k=1, \ldots, K_n+p\}$ subject to the
$m$th-order difference penalty are chosen to minimize
\begin{equation}\label{equ:p}
\sum_{i=1}^n\big[y_i - \sum_{k=1}^{K_n+p}b_k
B_{k}^{[p]}(t_i)\big]^2 + \lambda^* \sum_{k=m+1}^{K_n+p}\big[\Delta^m
(b_k)\big]^2,
\end{equation}
where $\lambda^*>0$ and $\Delta$ is the backward difference
operator, i.e., $\Delta b_k \equiv b_k - b_{k-1}$ and
\begin{equation}\label{equ:diff0}
\Delta^m b_k = \Delta \Delta^{m-1}b_k = \cdots =  \sum_{j=0}^m
(-1)^{m-j} {m\choose j}b_{k-m+j}.
\end{equation}
For simplicity, we assume that both the design points and the
knots are equally spaced on the interval $[0,1]$. We also assume
that $n/K_n$ is an integer denoted by $M_n$. Hence every $M_n$th
design point is a knot, that is, $\kappa_j = t_{jM_n}$ for $j =
1,\ldots, K_n$; a more general case is discussed briefly in
Section \ref{sec:diss}. The $P$-spline estimator is given by
$\hat{f}^{[p]}(x)=\sum_{k=1}^{K_n+p} \hat{b}_k B^{[p]}_k(x)$.

This paper develops a general asymptotic theory of $P$-splines
under an arbitrary choice of $p$ and $m$. It is shown that the
$P$-spline estimator can be approximated by the solution of an
ordinary differential equation (ODE) with suitable boundary conditions. This
estimator is then shown to be described by a kernel estimator,
using a Green's function obtained from a closely related boundary
value problem as a kernel. The asymptotic properties of the
estimator thus are explicitly established based on the Green's
function and the solution of the differential equation.
%
%
It is worth mentioning that asymptotic analysis of smoothing
splines using Green's functions was performed by Rice and
Rosenblatt (1983), Silverman (1984), Messer (1991), Nychka (1995)
and Pal and Woodroofe (2007). However, these papers only treat
limited special cases. In contrast, the current paper develops a
general framework for $P$-splines. This framework leads to a
relatively simpler approach to obtain a closed-form expression of
an equivalent kernel for both inner points and boundary points at
the first time. Further, we show that the convergence rate of
$\hat f^{[p]}$ depends only on $m$ but not on $p$, as long as
$K_n$ tends to infinity fast enough; see Corollary \ref{coro:asy}
where $K_n$ is of order $n^\gamma$, where $\gamma>(2m-1)/(4m+1)$.

The contributions of the present paper are twofold: (i) the paper
develops a general approach for asymptotic analysis of a
$P$-spline estimator with an arbitrary spline degree and arbitrary
order difference penalty via Green's functions. To handle a
general $P$-spline estimator, various techniques for linear ODEs
are exploited to obtain a corresponding Green's function. (ii) the
closed-form expressions of equivalent kernels for both inner and
boundary points are established and convergence rates are
developed for general $P$-spline estimators. Compared with the
existing results based on matrix techniques, e.g. Li and Ruppert
(2008) and Claeskens, Krivobokova, and Opsomer (2009), the use of
Green's functions considerably simplifies the development  and
yields an instrumental alternative to establish the equivalent
kernels for general $P$-splines. Moreover, this also leads to the
convergence rates and the observation that the rates are
independent of the splines' degrees and the number of knots for an
arbitrary $P$-spline estimator. While this observation is pointed
out by Li and Ruppert (2008) for piecewise constant and piecewise
linear splines and is conjectured for general $P$-splines, no
rigorous justification has been given for general $P$-splines in
the literature; the current paper offers a satisfactory answer to
this issue in a general setting.

The paper is organized as follows. Section
\ref{sec:charact_estimator} characterizes the general $P$-spline
estimator as an approximate solution of a linear differential
equation subject to suitable boundary conditions.
Section~\ref{sec:gr} investigates the solution of such the
differential equation and obtains the related Green's functions as
equivalent kernels for a $P$-spline estimator of an arbitrary
B-spline degree with any order difference penalty. Using these
Green's functions, the asymptotic properties of $P$-splines are
established in Section~\ref{sec:asy}. Section~\ref{sec:boundary}
addresses kernel approximation near the boundary of the design
set. By formulating boundary conditions as an appropriate integral
form, an explicit equivalent kernel is obtained. Finally,
extensions to unequally spaced data and multivariate $P$-splines
are discussed in Section~\ref{sec:diss}.


\section{Characterization of the estimator}
\label{sec:charact_estimator}

Let $X=[B_k(x_i)]\in \mathbb{R}^{n\times (K_n+p)}$
be the design matrix, and let $D_m \in \mathbb{R}^{(K+p-m)\times(K+p)}$ be the
$m$th-order difference matrix such that
$D_mb = [\Delta^m (b_{m+1}), \ldots,
\Delta^m (b_{K_n+p})]^T $. The optimality condition is given by
\begin{equation}\label{equ:p2}
(X^TX + \lambda^* D_m^TD_m)\hat{b} = X^Ty,
\end{equation}
where $y=(y_1, \ldots, y_n)^T$.

To characterize the $P$-spline estimator $\hat f^{[p]}$, we
introduce more notation. Define $C \in
\mathbb{R}^{(K_n+p)\times(K_n+p)}$ and  $\tilde{C}\in
\mathbb{R}^{(K_n+p)\times n}$, respectively, as
$$C = \left[\begin{array}{ccccccccc}
1 & 0 & 0  &0& \cdots & 0 & 0\\
1 & 1 & 0 &0 & \cdots & 0 & 0\\
1 & 1 & 1 & 0 & \cdots & 0 & 0 \\
  & \cdots & & & \cdots & &\\
1 & 1 &  1 &1 &\cdots & 1 &0\\
1 & 1 &  1 &1 &\cdots & 1 &1
\end{array}
 \right] ~~~\mbox{and}~~~\tilde{C} = \left[\begin{array}{cccccccccccccccccc}
0 & 0 & 0 & \cdots & 0 &0\\
 & \cdots &&\cdots &&\\
0 & 0 & 0 &  \cdots & 0 &0\\
{\bf 1}^T & 0 & 0 & \cdots & 0 &0\\
{\bf 1}^T & {\bf 1}^T & 0 &  \cdots & 0 &0\\
   & \cdots &&\cdots &&\\
{\bf 1}^T & {\bf 1}^T & {\bf 1}^T & \cdots & {\bf 1}^T &0\\
{\bf 1}^T & {\bf 1}^T & {\bf 1}^T &  \cdots & {\bf 1}^T &{\bf 1}^T
\end{array}
 \right],$$
where ${\bf 1}= [1, 1, \cdots, 1]^T\in \mathbb{R}^{M_n\times 1}$.
Since $C$ is invertible, for any $k \in \mathbb N$, (\ref{equ:p2})
is equivalent to
\begin{equation}\label{equ:p3}
\lambda^* C^k D_m^TD_m \hat{b} + C^k X^T\hat{f}  = C^k X^Ty,
\end{equation}
where $\hat{f} = [\hat{f}^{[p]}(x_1), \ldots,
 \hat{f}^{[p]}(x_n)]^T$ and $C^k = \underbrace{C C\cdots C}_{k-\mbox{copies}}$.
The matrix $D_m^TD_m$ is a banded symmetric matrix. Except for the
first $m$ and last $m$ rows, every row of $D_m^TD_m$ has the form $(0,
\cdots, 0, \omega_0^*, \omega_1^*, \cdots, \omega_{2m}^*, 0,
\cdots, 0)$, where
$\omega_j^* = (-1)^m (-1)^{2m-j} {2m\choose j}$, $j=0, \ldots, 2m$.
Moreover, except for the first $m-k$ and last $m$ rows, the $i$th
row of $C^k D_m^TD_m$ has the form
$$\Big( \, \underbrace{0, \cdots, 0,}_{(i-m+k-1)-\mbox{copies}} \omega_0, \cdots, \omega_{2m-k}, \underbrace{0, \cdots, 0}_{(K_n+p)-(i+m)-\mbox{copies}}\, \Big),$$
where \begin{equation} \omega_j = (-1)^m (-1)^{2m-k-j}
{2m-k\choose j}, \ j=0, \ldots, 2m-k.
\end{equation} Further, the elements of the last $k$ rows of $C^k D_m^TD_m$ are all zeros.
In particular, when $k=m$,
\begin{equation}\label{equ:dd}
C^m D_m^TD_m \hat{b} = (-1)^m\
\big[~\Delta^{m}\hat b_{m+1}, \Delta^{m}\hat b_{m+2}, \cdots,
\Delta^{m}\hat b_{K_n+p}, 0, \ldots, 0~\big]^T.
\end{equation}
It is also interesting to note the derivative formula for B-spline functions (de Boor, 2001)
\begin{equation}\label{equ:derivative}{d^{\,l}\over dx^l}\sum_{k=1}^{K_n+p}b_kB_k^{[p]}(x)
= \sum_{k=l+1}^{K_n+p}K_n^l\Delta^l b_k \
B_{k-1}^{[p-l]}(x),~~~ l\le p.\end{equation}
Hence,
$${d^m\over dx^m}\sum_{k=1}^{K_n+m} \hat{b}_k B_k^{[m]}(x) =
K_n^m \sum_{k=m+1}^{K_n+m}\Delta^m \hat{b}_kB_{k-m}^{[0]}(x),$$
and therefore,
\begin{equation}\label{equ:diff2}\Delta^m \hat{b}_{m+k} = {1\over K_n^m}{d^m\over dx^m}\hat{f}^{[m]}(x), ~~~~ x\in
(\kappa_{k-1}, \kappa_k], ~~~k=1, \ldots, K_n.\end{equation}

Let $\omega_1$ be the uniform distribution on $x_1, \ldots, x_n$
and $\omega_2$ be the uniform distribution on $\kappa_1, \ldots,
\kappa_{K_n}$. Let $g$ and $\check{f}$ be two piecewise constant
functions for which $g(x_k) = y_k$ and
$\check{f}(x_k)=\hat{f}(x_k)$ for $k = 1, . . . , n$,
respectively. Let $G_1(x) = \int_0^x g(t)d\omega_1(t)$,
$\check{F}_1(x)=\int_0^x \check{f}(t)d\omega_1(t)$, $\hat{F}_1(x) = \int_0^x \hat{f}(t)dt$, and for $k\ge
2$, define
$$G_k(x) = \int_0^x G_{k-1}(t)d\omega_2(t), ~~~~
\check{F}_k(x) = \int_0^x\check{F}_{k-1}(t)d\omega_2(t), ~~~~\hat{F}_k(x) =
\int_0^x \hat{F}_{k-1}(t)dt.$$ To obtain the analogous
representation for $\hat f$, we introduce a few variables and
functions related to the true regression function $f$. Define
$\Phi_1(x) = \int_0^x f(t)dt$, $\tilde \Phi_1(x) = \int_0^x
f(t)d\omega_1(t)$, and for $k\ge 2$,
$$\Phi_k(x)=\int_0^x\Phi_{k-1}(t)dt, ~~~~\tilde
\Phi_k(x)=\int_0^x\tilde\Phi_{k-1}(t)d\omega_2(t).$$
Letting $R = C\ X^T - \tilde{C}$, we have
$C^m X^T\hat{f}=C^{m-1} \tilde{C}\hat{f} +C^{m-1} R\hat{f}$.
Therefore, the $j$th row of (\ref{equ:p3}), when $k=m$, can
be written as
\begin{equation}\label{equ:diff}
\check{F}_m(\kappa_{j+p-1}) + R_{fj} + (-1)^m {\lambda^*\over nK_n^{m-1}} \Delta^m
b_{m+j}  = G_m(\kappa_{j+p-1}) + R_{yj}, \ \ j=1,\ldots,
K_n,
\end{equation}
where $R_{fj}$ and $R_{yj}$ are the $j$th row of ${1\over
  nK_n^{m-1}}C^{m-1} R\hat{f}$ and ${1\over
  nK_n^{m-1}}C^{m-1} Ry$, respectively.
Furthermore, since the elements of the last $k$ rows of $C^k D_m^TD_m$ are all zeros,
we also have
  \begin{equation}\label{equ:bd1}
\check{F}_k(1) =
G_k(1), ~~~~~ k=1, \ldots, m.
\end{equation}

Next, we proceed by replacing that difference equation
(\ref{equ:diff}) by an analogous differential equation. We shall
focus on the case when $p=m$ first; the case when $p\neq m$ will
be discussed in Section~\ref{sec:asy}. For any $x\in [0,1]$,
letting $k_x=\lfloor K_nx \rfloor+1$, (\ref{equ:diff}) gives
\begin{equation}\label{equ:diffx}
\check{F}_m(\kappa_{k_x+p}) + R_{f,k_x+1} + (-1)^m {\lambda^*\over nK_n^{m-1}} \Delta^m
b_{m+k_x+1}  = G_m(\kappa_{k_x+p}) + R_{y,k_x+1}.\end{equation}
Define \begin{equation}
\tilde R(x) =
\hat{F}_m(x)-G_m(x)+G_m(\kappa_{k_x+p})-\check{F}_m(\kappa_{k_x+p})
+R_{y,k_x+1}-R_{f,k_x+1}.
\end{equation}
Then, from (\ref{equ:diff2}) and (\ref{equ:diffx}), $\hat{F}_m$ solves the ordinary differential equation
\begin{equation}\label{equ:ode}
(-1)^m \alpha \hat{F}^{(2m)}_m(x) + \hat{F}_m(x) = G_m(x) + \tilde R(x), ~~~~ 0\le x\le 1,
\end{equation}
where $\alpha = \lambda^*/(nK_n^{2m-1})$. We have $2m$ boundary
conditions for (\ref{equ:ode}):
$$\hat{F}_m^{(k)}(0)=0, ~~~\hat{F}_m^{(k)}(1) = G_{m-k}(1)+e_{m-k}, ~~~ k=0, \ldots, m-1,$$ where $e_{m-k} =
\hat{F}_m^{(k)}(1)-\check{F}_{m-k}(1)$. We shall show that $\hat{f}^{[p]}$  is
stochastically bounded, therefore the $e_k$ are
small with an order of $O_p(1/n)$.

%

\section{Green's functions}\label{sec:gr}

The solution to (\ref{equ:ode}) can be represented by a
corresponding Green's function explicitly. It shall be shown that
the $P$-spline estimator can be approximated by a kernel
estimator, using the corresponding Green's function.
For this end, consider the differential equation
\begin{equation}\label{eqn:govern}
(-1)^m \alpha F^{(2m)}(t) + F(t) = G(t),~~~~~~~ 0\le t\le 1,
\end{equation}
subject to the boundary conditions $F^{(i)}(0)=0$ and
$F^{(i)}(1)=G^{(i)}(1)$, $i = 0,\ldots, m-1$. Let $\beta \equiv
\alpha^{-1/(2m)}$. We consider two cases: (1) $m$ is even; and (2)
$m$ is odd.

%
\subsection{Even $m$} \label{sect:kernel_even}

In this case, the characteristic equation is given by
$\lambda^{2m} + \beta^{2m} =0$, and we obtain $2m$ eigenvalues
\[
   \lambda_k = \beta \Big[ \cos\frac{(1+2k)\pi}{2m} + \imath \,
     \sin \frac{(1+2k)\pi}{2m} \Big], \ \ \ k=0, 1, \cdots, 2m-1.
\]
Let
$$\mu_k = \cos\frac{(1+2k)\pi}{2m}  ~~\mbox{ and }~~  \omega_k =  \sin
\frac{(1+2k)\pi}{2m}.$$ Then the homogeneous ODE: $\alpha
F^{(2m)}(t) + F(t) =0$ has $2m$ solutions
\[
  e^{(\pm \mu_k \pm \imath \omega_k) \beta t} = e^{ \pm \beta \mu_k t } \, \big[ \cos (\beta\omega_k t)  \pm \imath \,
   \sin(\beta\omega_k t) \big], \ \ \ k=0, \cdots, \frac{m}{2}-1,
\]
where $\mu_k>0$ and $\omega_k>0$ for $k=0, \cdots, \frac{m}{2}-1$.

To find the corresponding Green's function for the ODE: $\alpha
F^{(2m)}(t) + F(t) = G(t)$ on $[0, 1]$, we define the following
function
\begin{equation} \label{eqn:L_even}
   L(t) \, \equiv \, \sum^{\frac{m}{2}-1}_{k=0} \, \beta \, e^{-\beta \mu_k t} \big[
    \, c_k  \,\cos (\omega_k \beta t) +  d_k \, \sin(\omega_k \beta t) \big],
\end{equation}
where the coefficients $c_k, d_k$ are to be determined, and
$
    K(t, s) \, \equiv \, L(|t-s|).
$ Since $L$ is a linear combination of the solutions of the
homogeneous ODE, $L^{(2m)} + \beta^{2m} L=0$ also holds. Let
\[
   F_0(t) \equiv \int^1_{0} K(t, s) G(s) ds, \ \ \ t\in[0, 1].
\]
By noting $F_0(t) = \int^t_0 L(t-s) G(s) ds + \int^1_t
L(s-t)G(s)ds$ for all $ t \in [0, 1]$, it is easy to verify that
if
\begin{equation} \label{eqn:L}
   L^{(k)}(t)\big|_{t=0} = 0, \ \ \forall \ \ k=1, 3, \cdots, 2m-3,
   \ \ \mbox{ and } \ \ \ L^{(2m-1)}(t)\big|_{t=0} = \frac{ \beta^{\, 2m} }{2},
\end{equation}
then $F_0(t)$ is a solution of $\alpha F^{(2m)} + F =
G$.

To find the coefficients $c_k, d_k$, define
\[
   p_{\, k}(t) \, \equiv \, e^{-\beta \mu_k t} \big[
    \, c_k  \,\cos (\omega_k \beta t) +  d_k \, \sin(\omega_k \beta t)
    \big], \ \ \
   q_k(t) \, \equiv \, e^{-\beta \mu_k t} \big[
    \, - c_k  \,\sin (\omega_k \beta t) +  d_k \, \cos(\omega_k \beta t)
    \big].
\]
Hence $p_{\, k}(0) = c_k$ and $q_k(0) = d_k $. Since
\begin{equation} \label{eqn:p_q}
   \begin{pmatrix}  p'_{\, k}(t)  \\ q'_k(t) \end{pmatrix} \, =
   \,  \beta \underbrace{ \begin{bmatrix} -\mu_k & \omega_k \\ -\omega_k &
   -\mu_k \end{bmatrix} }_{A_k}
     \begin{pmatrix}  p_{\, k}(t)  \\ q_k(t) \end{pmatrix},
\end{equation}
we have
\[
   \begin{pmatrix}  p^{(j)}_{\, k}(t)  \\ q^{(j)}_k(t) \end{pmatrix} \, =
   \,  \big( \beta A_k \big)^j
     \begin{pmatrix}  p_{\, k}(t)  \\ q_k(t) \end{pmatrix},
\]
where $p^{(j)}_{\, k}(t)$ and $q^{(j)}_k(t)$ stand for the $j$-th
derivatives of $p_k$ and $q_k$ respectively. Letting $A^j_k(i,
\ell)$ denote the $(i,\ell)$-element of $A^j_k$, we obtain the
following linear equation for $\{ c_k, d_k \}$ from (\ref{eqn:L}):
\begin{equation} \label{eqn:coefficient_even}
  {\small \underbrace{
   \begin{bmatrix}
      A_0(1,1) & A_0(1,2) &  \cdots & \cdots
      & A_{\frac{m}{2}-1}(1,1) & A_{\frac{m}{2}-1}(1,2) \\
      A^3_0(1,1) & A^3_0(1,2) &  \cdots & \cdots
      & A^3_{\frac{m}{2}-1}(1,1) & A^3_{\frac{m}{2}-1}(1,2) \\
       \vdots & \vdots &  & & \vdots & \vdots \\
     A^{(2m-3)}_0(1,1) & A^{(2m-3)}_0(1,2) &  \cdots & \cdots
      & A^{(2m-3)}_{\frac{m}{2}-1}(1,1) & A^{(2m-3)}_{\frac{m}{2}-1}(1,2) \\
    A^{(2m-1)}_0(1,1) & A^{(2m-1)}_0(1,2) &  \cdots & \cdots
      & A^{(2m-1)}_{\frac{m}{2}-1}(1,1) & A^{(2m-1)}_{\frac{m}{2}-1}(1,2)
   \end{bmatrix}
    }_{A^e}
   \begin{bmatrix} c_0 \\ d_0 \\ \vdots \\ c_{\frac{m}{2}-1} \\ d_{\frac{m}{2}-1}
   \end{bmatrix} \, = \,
   \begin{bmatrix}
       0 \\ 0 \\ \vdots \\ 0 \\ \frac{1}{2}
   \end{bmatrix}.
   }
\end{equation}
It shall be shown in Lemma~\ref{lem:coefficient} that the above
equation has a unique solution.

%
\subsection{Odd $m$ } \label{sect:kernel_odd}

The characteristic equation is given by $\lambda^{2m} - \beta^{2m}
=0$ and the eigenvalues are:
\[
   \lambda_k = \beta \Big( \cos\frac{k\pi}{m} + \imath \,
     \sin \frac{k\pi}{m} \Big), \ \ \ k=0, 1, \cdots, 2m-1.
\]
Then the homogeneous ODE: $\alpha F^{(2m)}(t) + F(t) =0$ has $2m$
solutions: $e^{\pm \beta t}$ and
\[
   e^{(\pm \mu_k \pm \imath \omega_k) \beta t} = e^{ \pm \beta \mu_k t } \, \big[ \cos (\beta\omega_k t)  \pm \imath \,
   \sin(\beta\omega_k t) \big], \ \ \ k=1, \cdots, \frac{m-1}{2},
\]
where $\mu_k = \cos\frac{k \pi}{m}>0$ and $\omega_k=\sin\frac{k
\pi}{m}>0$ for $k=1, \cdots, \frac{m-1}{2}$. Similar to the even
case,  define
\begin{equation} \label{eqn:P_odd}
   P(t) \, \equiv \, c_0 \, \beta e^{-\beta t} + \sum^{(m-1)/2}_{k=1} \, \beta \, e^{-\beta \mu_k t} \big[
    \, c_k  \,\cos (\omega_k \beta t) +  d_k \, \sin(\omega_k \beta t) \big],
\end{equation}
where the coefficients $c_k, d_k$ are to be determined, and $P(t)$
satisfies $P^{(2m)}(t) - \beta^{2m} P(t)=0$. Let $K(t,s) \equiv
P(|t-s|)$ and $   F_0(t) \equiv \int^1_{0} K(t,s) G(s) ds$. It can
be verified that if
\begin{equation} \label{eqn:P}
   P^{(k)}(t)\big|_{t=0} = 0, \ \ \forall \ \ k=1, 3, \cdots, 2m-3,
   \ \ \mbox{ and } \ \ \ P^{(2m-1)}(t)\big|_{t=0} = -\frac{\beta^{\,
   2m}}{2},
\end{equation}
then $F_0(t)$ is a solution of $\alpha F^{(2m)} - F =
-G$. Similarly, it can be shown that $P$ is also a $2m$th-order kernel.
To find the coefficients $c_0$ and $c_k, d_k$, we may use $p_{\,
k}$, $q_k$ and $A_k$ introduced in the last subsection. Indeed, we
obtain the following linear equation for $c_0$ and $\{ c_k, d_k
\}$ from (\ref{eqn:P}):
\begin{equation} \label{eqn:coefficient_odd}
  {\small
  \underbrace{ \begin{bmatrix}
     -1 & A_1(1,1) & A_1(1,2)   & \cdots & \cdots & A_{\frac{m-1}{2}}(1,1)
         & A_{\frac{m-1}{2}}(1,2) \\
     -1 & A^3_1(1,1) & A^3_1(1,2)  & \cdots & \cdots
      & A^3_{\frac{m-1}{2}}(1,1) & A^3_{\frac{m-1}{2}}(1,2) \\
       \vdots & \vdots & \vdots &   & & \vdots & \vdots  \\
     -1 & A^{(2m-3)}_1(1,1) & A^{(2m-3)}_1(1,2) & \cdots & \cdots
      & A^{(2m-3)}_{\frac{m-1}{2}}(1,1) & A^{(2m-3)}_{\frac{m-1}{2}}(1,2) \\
    -1 & A^{(2m-1)}_1(1,1) & A^{(2m-1)}_1(1,2) & \cdots & \cdots
      & A^{(2m-1)}_{\frac{m-1}{2}}(1,1) & A^{(2m-1)}_{\frac{m-1}{2}}(1,2)
   \end{bmatrix}
   }_{A^o}
   \begin{bmatrix} c_0 \\ c_1 \\ d_1 \\ \vdots \\ c_{\frac{m-1}{2}} \\ d_{\frac{m-1}{2}}
   \end{bmatrix} \, = \,
   \begin{bmatrix}
      0 \\ 0 \\ 0 \\ \vdots \\ 0 \\ -\frac{1}{2}
   \end{bmatrix}.
   }
\end{equation}

%
\subsection{The equivalent kernels}

\begin{lemma} \label{lem:coefficient}
  Each of the equations
  (\ref{eqn:coefficient_even}) and (\ref{eqn:coefficient_odd}) has a
  unique solution.
\end{lemma}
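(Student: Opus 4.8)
The plan is to show that each of the coefficient matrices $A^e$ and $A^o$ is nonsingular, which immediately gives existence and uniqueness of the solution to (\ref{eqn:coefficient_even}) and (\ref{eqn:coefficient_odd}). The cleanest route is to argue \emph{via} the uniqueness of solutions to the associated boundary value problem rather than by a direct determinant computation. Concretely, suppose $A^e c = 0$ for some vector $c = (c_0, d_0, \ldots, c_{m/2-1}, d_{m/2-1})^T$ (the odd case is analogous with the extra coordinate $c_0$ for the real eigenvalue $-\beta$). Then the function $L(t)$ defined by (\ref{eqn:L_even}), built from exactly this $c$, is a linear combination of solutions of the homogeneous ODE $\alpha F^{(2m)}+F=0$, and the homogeneous system $A^e c = 0$ says precisely that $L^{(k)}(0)=0$ for all odd $k$ in $\{1,3,\ldots,2m-1\}$. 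I would show these conditions, together with the structure of $L$, force $L\equiv 0$, hence $c=0$.

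The key step is to supply the \emph{even}-order derivative conditions at $0$ that are missing from the list. Here one uses the special form of $L$: it is $\sum_k \beta e^{-\beta\mu_k t}[c_k\cos(\omega_k\beta t)+d_k\sin(\omega_k\beta t)]$, a combination of the $m$ exponentials $e^{(-\mu_k\pm i\omega_k)\beta t}$, i.e. exactly the $m$ characteristic roots with \emph{negative} real part (for even $m$; for odd $m$ also the root $-\beta$). A combination of only the ``stable'' roots has no reason a priori to vanish; the point is rather to combine $L$ with its mirror image. Recall $K(t,s)=L(|t-s|)$ and $F_0(t)=\int_0^1 K(t,s)G(s)\,ds$; the even-derivative-at-$0$ conditions are automatically built into the jump analysis that makes $F_0$ a genuine solution, because $L$ being even-extended across $0$ kills the even-order derivatives of the jump. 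I would make this precise by writing $L(t) = \tfrac12\sum_{j} e^{\lambda_j^- t}$-type combinations and observing that the map $c \mapsto (L^{(1)}(0), L^{(3)}(0),\ldots,L^{(2m-1)}(0))$ is, up to the invertible change of basis between $\{c_k,d_k\}$ and the complex coefficients of $e^{\lambda_j^- t}$, a Vandermonde-type map in the $m$ distinct roots $\lambda_j^-$ evaluated only at the odd powers.

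This reduces the lemma to a purely algebraic fact: if $\sum_{j=1}^m a_j \lambda_j^{\,2\ell+1} = 0$ for $\ell = 0,1,\ldots,m-1$, where $\lambda_1,\ldots,\lambda_m$ are the $m$ roots of $\lambda^{2m}=\mp\beta^{2m}$ lying in the open left half plane (distinct, and none purely imaginary, and in the even-$m$ case none real), then all $a_j=0$. Since the $\lambda_j$ are nonzero and pairwise distinct, the matrix $[\lambda_j^{\,2\ell+1}]_{\ell,j}$ equals $\mathrm{diag}(\lambda_j)$ times the Vandermonde matrix $[\lambda_j^{\,2\ell}]_{\ell,j} = [(\lambda_j^2)^\ell]$, so nonsingularity follows once the squares $\lambda_j^2$ are pairwise distinct. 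That last point is where the main obstacle lies and where the parity of $m$ matters: one must check that no two of the chosen left-half-plane roots are negatives of each other. For even $m$ the roots are $\beta e^{i(1+2k)\pi/(2m)}$ and the left-half-plane ones are $\{\mu_k\pm i\omega_k\}\beta$ with $\mu_k<0$; a short angle computation shows their arguments are distinct modulo $\pi$, so their squares are distinct. For odd $m$ one has the real root $-\beta$ together with conjugate pairs $\beta e^{\pm ik\pi/m}$, $k=1,\ldots,(m-1)/2$, with $\cos(k\pi/m)<0$; again the arguments are distinct modulo $\pi$ (using $1\le k\le (m-1)/2$ together with $m$ odd to rule out $k\pi/m \equiv k'\pi/m \pmod \pi$ and to rule out $k\pi/m\equiv \pi \pmod \pi$), and $-\beta$ squares to $\beta^2$ which is not hit by any of the complex ones. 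Assembling these observations, $A^e c=0 \Rightarrow c = 0$ and likewise $A^o c = 0 \Rightarrow c = 0$, so both matrices are invertible and each linear system has a unique solution, as claimed. $\qquad\blacksquare$
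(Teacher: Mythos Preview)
Your proposal is correct and takes a genuinely different route from the paper. The paper proves invertibility of $A^e$ and $A^o$ by a direct orthogonality computation: using the trigonometric identities $\sum_{k=1}^p\cos((2k-1)\theta)=0$ for $\theta=\tfrac{q}{2p}\pi$ and $\sum_{k=1}^p\sin((2k-1)\theta)=0$ for $\theta=\tfrac{q}{p}\pi$, it shows that $A^e(A^e)^T=\tfrac{m}{2}I$ and that $(A^o)^T A^o$ is diagonal with positive entries. Your approach instead complexifies: you rewrite $L$ (resp.\ $P$) as $\sum_j a_j e^{\lambda_j t}$ over the $m$ characteristic roots $\lambda_j$ with negative real part, observe that $A^e c=0$ is equivalent to the system $\sum_j a_j\lambda_j^{2\ell+1}=0$ for $\ell=0,\ldots,m-1$, and factor the coefficient matrix as $\mathrm{diag}(\lambda_j)$ times the Vandermonde matrix in $\lambda_j^2$. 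Conceptually this is cleaner and avoids the trig-identity bookkeeping; the paper's computation buys a little more, namely the explicit inverse $(A^e)^{-1}=\tfrac{2}{m}(A^e)^T$.

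Two remarks. First, the angle-by-angle checking you sketch is more work than needed: since every $\lambda_j$ you select has strictly negative real part, $-\lambda_j$ has strictly positive real part and hence cannot equal any $\lambda_k$; thus $\lambda_j^2=\lambda_k^2\Rightarrow\lambda_j=\lambda_k$, and distinctness of the squares follows in one line for both parities. Second, your parenthetical description of the odd-$m$ roots is off: for $k=1,\ldots,(m-1)/2$ the angles $k\pi/m$ lie in $(0,\pi/2)$, so $\cos(k\pi/m)>0$ and $\beta e^{\pm ik\pi/m}$ lie in the \emph{right} half-plane; the left-half-plane complex roots used in $P$ are $\beta(-\mu_k\pm i\omega_k)$ with $\mu_k=\cos(k\pi/m)>0$. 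This is only a labeling slip and does not affect the argument. The opening discussion about supplying even-order derivative conditions via the mirror image of $L$ is a false start that plays no role once you pass to the Vandermonde reduction, and can be dropped.
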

\begin{proof}We introduce some trigonometric identities
to be used in the proof. Let $p, q \in \mathbb N$. By observing
$\sin(-\theta)\sum^p_{k=1} \cos[(2k-1)\theta] = \frac{1}{2}
\sum^p_{k=1} \big[ \sin(2(k-1)\theta) +\sin(-2k\theta)\big] $ and
$\sin(\theta)\sum^p_{k=1} \sin[(2k-1)\theta] = \frac{1}{2}
\sum^p_{k=1} \big[ \cos(2(k-1)\theta) -\cos(2k\theta)\big] $, it
is easy to see  (i) for $\displaystyle \theta=\frac{q}{2p}\pi$,
$\sum^p_{k=1} \cos[(2k-1)\theta] = 0$; and (ii) for $\displaystyle
\theta=\frac{q}{p}\pi$, $\sum^p_{k=1} \sin[(2k-1)\theta] = 0$.

We consider an even $m$ first. Let $\displaystyle \theta \equiv
\pi -\frac{\pi}{2m} $. It is clear that $-\mu_k = \cos\big( (2k+1)
\theta \big)$ and $\omega_k = \sin\big( (2k+1) \theta \big)$ for
all $k=0, \cdots, m/2-1$. Hence $A_k$ in (\ref{eqn:p_q}) becomes
$A_k = M( (2k+1)\theta) $, where $M(\cdot) \in \mbox{SO}(2)$ is
given by
\begin{equation} \label{eqn:M_matrix}
 M (\cdot) \equiv
   \begin{bmatrix} \cos(\cdot) & \sin(\cdot) \\
    -\sin(\cdot) & \cos (\cdot) \end{bmatrix}.
\end{equation}
Thus $(A_k)^j = M( j (2k+1)\theta)$. Let $A^e_{i\bullet }$ denote
the $i$th row of $A^e$ and $\eta_i \equiv (2i-1) \theta$. Hence,
\begin{eqnarray*}
  A^e_{i \bullet} \,  = \,    \begin{pmatrix} \ \cos (\eta_i ) &  \sin(\eta_i) &
  \cos(3\eta_i ) &  \sin(  3 \eta_i ) &
   \cdots & \cos(  (m-1) \eta_i ) & \sin(  (m-1)\eta_i ) \
   \end{pmatrix}.
\end{eqnarray*}
Therefore, $  A^e_{i \bullet } \, \big( A^e_{i\bullet } \big)^T =
\frac{m}{2}$, and if $i \ne j$, then
\begin{eqnarray*}
    A^e_{i \bullet } \,  \big(  A^e_{j\bullet} \big)^T  & = &
   \sum^{\frac{m}{2}}_{\ell=1} \Big[ \cos ((2\ell-1)(2i-1)\theta) \, \cos
   ((2\ell-1)(2j-1)\theta) \\
  & & \ \ \ \ \ \ \ \ \ \ \ + \sin ((2\ell-1)(2i-1)\theta) \, \sin ((2\ell-1)(2j-1)\theta)
   \Big] \\
   & = & \sum^{\frac{m}{2}}_{\ell=1} \cos\big( 2(2\ell-1)(i-j)\theta \big)
    \, = \, \sum^{\frac{m}{2}}_{\ell=1} \cos\big( (2\ell-1)(i-j)\frac{\pi}{m}
   \big) \, = \, 0,
\end{eqnarray*}
where the last step is attained from (i). This shows that $ A^e
(A^e)^T = \frac{m}{2} I$. Thus $A^e$ is invertible so that
equation (\ref{eqn:coefficient_even}) has a unique solution.

We then consider an odd $m$. In this case, $-\mu_k = \cos\big(\pi-
\frac{k\pi}{m} \big)$ and $\omega_k = \sin\big( \pi-
\frac{k\pi}{m} \big)$ for $k=1, \cdots, (m-1)/2$. Let $\gamma_k
\equiv \pi - \frac{k\pi}{m}$. Then the $i$th row of $A^o$ is given
by
\begin{eqnarray*}
  A^o_{i \bullet} &  = &   \left(  \begin{array}{cccccccccccc}
     \cos((2i-1)\pi) & \cos ((2i-1)\gamma_1 ) &  \sin((2i-1)\gamma_1) &
  \cos((2i-1)\gamma_2 ) &  \sin(  (2i-1) \gamma_2 )
   \\ \end{array} \right. \\
    &  &  \left.  \begin{array}{cccccccccccc}  \hspace{1.8in} \cdots & \cdots & \cos\big(  (2i-1) \gamma_{\frac{m-1}{2}} \big) &
      \sin\big(  (2i-1)\gamma_{\frac{m-1}{2}} \big) \end{array}
      \right).
\end{eqnarray*}
Let $A^o_{\bullet i}$ denote the $i$th column of $A^o$. Clearly $
\big( A^o_{\bullet i} \big)^T \, A^o_{\bullet i }
>0$. For $i \ne j$, either $\big( A^o_{\bullet i} \big)^T \,
A^o_{\bullet j } = \sum^m_{k=1} \cos((2k-1)\gamma_s)
\cos((2k-1)\gamma_t)$ with $s \ne t$ or $\big( A^o_{\bullet i}
\big)^T \, A^o_{\bullet j } = \sum^m_{k=1} \cos((2k-1)\gamma_s)
\sin((2k-1)\gamma_t)$,  for some $s, t \in\{1, \cdots,
\frac{m-1}{2} \}$. Since
\begin{eqnarray*}
 \sum^m_{k=1} \cos\big((2k-1)\gamma_s\big) \cos\big((2k-1)\gamma_t \big) =
  \frac{1}{2} \sum^m_{k=1} \Big[ \cos((2k-1)(\gamma_s + \gamma_t))
  +\cos((2k-1)(\gamma_s - \gamma_t))\Big], \\
 \sum^m_{k=1} \cos\big((2k-1)\gamma_s \big) \sin\big((2k-1)\gamma_t\big)=
  \frac{1}{2} \sum^m_{k=1} \Big[ \sin((2k-1)(\gamma_s + \gamma_t))
  +\sin((2k-1)(\gamma_s - \gamma_t))\Big],
\end{eqnarray*}
we conclude that $\big( A^o_{\bullet i} \big)^T \, A^o_{\bullet j
} =0$ by using (i)--(ii) established at the beginning of the
proof. This shows that $(A^o)^T A^o$ is a diagonal matrix with
positive diagonal entries. Therefore $A^o$ is invertible and
equation (\ref{eqn:coefficient_odd}) has a unique
solution.
\end{proof}

The following proposition show that $L$ and $P$ derived above
yield the equivalent kernels.

\begin{prop}
When $\beta=1$, $L(|t|)$ in (\ref{eqn:L_even}) and $P(|t|)$ in
(\ref{eqn:P_odd}) are $2m$th order kernels respectively.
\end{prop}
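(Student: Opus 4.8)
The plan is to recognize $\ell(t) := L(|t|)$ and $\wp(t) := P(|t|)$, with $\beta = 1$, as the fundamental solutions on all of $\mathbb{R}$ of the operators underlying (\ref{eqn:govern}), and then to read off the kernel properties from their Fourier transforms.

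First I would record that since $\mu_k > 0$ for every $k$, the functions $L$, $P$ and all their derivatives decay exponentially as $t \to +\infty$; hence $\ell$ and $\wp$ are even, integrable functions on $\mathbb{R}$ with finite polynomial moments of all orders, and all boundary contributions at $\pm\infty$ in the computations below are negligible. Next I would look carefully at the origin. Take $m$ even first. By construction $L$ satisfies (\ref{eqn:L}), so the odd-order derivatives $L^{(j)}(0)$ vanish for $j = 1, 3, \dots, 2m-3$; since $\ell$ is even, $\ell^{(j)}(0^-) = (-1)^j L^{(j)}(0^+)$, and comparing with $\ell^{(j)}(0^+) = L^{(j)}(0^+)$ shows $\ell \in C^{2m-2}(\mathbb{R})$ (the even-order derivatives match automatically, the low odd ones match because both sides vanish). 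The first genuine discontinuity is in $\ell^{(2m-1)}$, which jumps across $0$ by $\ell^{(2m-1)}(0^+) - \ell^{(2m-1)}(0^-) = 2 L^{(2m-1)}(0^+) = 1$. Since $L^{(2m)} + L = 0$ off the origin and $(-1)^m = 1$, this yields the distributional identity $\ell^{(2m)} + \ell = \delta_0$ on $\mathbb{R}$; this is, modulo the change of domain, precisely the verification already indicated right after (\ref{eqn:L}). For $m$ odd the argument is identical: now $P$ satisfies (\ref{eqn:P}), with $P^{(2m-1)}(0^+) = -\tfrac12$, so $\wp^{(2m-1)}$ jumps by $-1$, and since $P^{(2m)} - P = 0$ off the origin and $(-1)^m = -1$ one gets $\wp^{(2m)} - \wp = -\delta_0$ on $\mathbb{R}$.

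Taking Fourier transforms with the convention $\widehat{h}(\xi) = \int h(t) e^{-\imath\xi t}\,dt$, so that $\widehat{h^{(2m)}}(\xi) = (\imath\xi)^{2m}\widehat{h}(\xi) = (-1)^m \xi^{2m}\widehat{h}(\xi)$, both of the preceding identities collapse to the single relation $(1 + \xi^{2m})\,\widehat{\ell}(\xi) = 1 = (1 + \xi^{2m})\,\widehat{\wp}(\xi)$, the sign of $(\imath\xi)^{2m}$ and the sign of the zeroth-order coefficient compensating in each parity. Hence $\widehat{\ell}(\xi) = \widehat{\wp}(\xi) = (1 + \xi^{2m})^{-1} = 1 - \xi^{2m} + \xi^{4m} - \cdots$ near $\xi = 0$. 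Reading off Taylor coefficients: the value at $0$ is $1$, the derivatives of orders $1, \dots, 2m-1$ vanish at $0$, and the derivative of order $2m$ equals $-(2m)! \ne 0$; equivalently $\int \ell = 1$, $\int t^j \ell(t)\,dt = 0$ for $1 \le j \le 2m-1$, and $\int t^{2m}\ell(t)\,dt = (-1)^{m+1}(2m)! \ne 0$, and the same for $\wp$. That is exactly the statement that $\ell$ and $\wp$ are $2m$th order kernels.

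The routine parts are the exponential-decay estimates and the dictionary between Taylor coefficients of $\widehat{\ell}$ and moments of $\ell$. The delicate point — and the main obstacle — is the second step: one must verify that no delta mass of order below $2m$ leaks into $\ell^{(2m)}$, i.e. that (\ref{eqn:L}) (resp. (\ref{eqn:P})) genuinely renders $\ell$ (resp. $\wp$) of class $C^{2m-2}$ across the origin, with a jump confined to the $(2m-1)$st derivative; this is precisely where the vanishing of the low-order odd derivatives at $0$ is used. A more computational route that avoids distributions is to evaluate $\int_0^\infty t^j L(t)\,dt$ for each even $j \le 2m$ by integrating by parts $2m$ times using $L^{(2m)} = -L$ together with (\ref{eqn:L}); this reaches the same conclusion with more algebra.
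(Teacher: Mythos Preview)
Your argument is correct, and it takes a genuinely different route from the paper's. The paper works directly on $[0,\infty)$: for each even $k\le 2m-1$ it writes $\int_0^\infty \tau^k L(\tau)\,d\tau = -\int_0^\infty \tau^k L^{(2m)}(\tau)\,d\tau$ via the homogeneous ODE, integrates by parts repeatedly, and observes that after sending $t\to\infty$ the sole surviving boundary contribution is a multiple of $L^{(2m-1-k)}(0)$, which vanishes for $2\le k\le 2m-2$ by (\ref{eqn:L}) and equals $\tfrac12$ when $k=0$ (giving $\int\ell=1$); odd $k$ are handled by symmetry. This is precisely the ``more computational route'' you sketch in your last paragraph. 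Your primary argument instead packages the same information into the single distributional identity $(-1)^m\ell^{(2m)}+\ell=\delta_0$ on $\mathbb R$ and reads off all moments simultaneously from $\widehat\ell(\xi)=(1+\xi^{2m})^{-1}$. The Fourier route is more conceptual, unifies the even- and odd-$m$ cases in one formula, and yields the exact value $\int t^{2m}\ell(t)\,dt=(-1)^{m+1}(2m)!$ for free, confirming the order is exactly $2m$; the paper's integration-by-parts route is more elementary (no tempered distributions or Fourier analysis) but stops after verifying the first $2m-1$ moments vanish and does not compute the $2m$th moment. Both hinge on the same structural input, namely the conditions (\ref{eqn:L}) and (\ref{eqn:P}) on the odd derivatives at the origin.
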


\begin{proof}
We consider $L(|t|)$ only since the other case follows from the
similar argument. We shall show that $\int^\infty_{-\infty}
L(|\tau|) d\tau=1$ and $\int^\infty_{-\infty} \tau^{k} L(|\tau|)
d\tau=0$ for all $k=1, \cdots, 2m-1$. This holds true trivially
when $k$ is odd. For an even $k$, by observing $ L^{(2m)}+
\beta^{2m} L =0$ (with $\beta=1$), we have
\[
  \int^\infty_{-\infty} \tau^{k} L(|\tau|) d\tau= 2\int^\infty_{0}
  \tau^{k} L(\tau) d\tau = -2\int^\infty_{0} \tau^{k} L^{(2m)}(\tau) d\tau.
\]
Repeatedly using the integration by part, we deduce
\[
   \int^t_0 \tau^k L^{(2m)}(\tau) d\tau = \sum^k_{i=0}
   \frac{k!}{(k-i)!} (-1)^i t^{k-i}  \Big( L^{(2m-1-i)}(t) -
   L^{(2m-1-i)}(0) \Big).
\]
In light of (\ref{eqn:L}), we obtain the desired result.
\end{proof}

\begin{figure}[h]
    \begin{center}
        \epsfig{figure=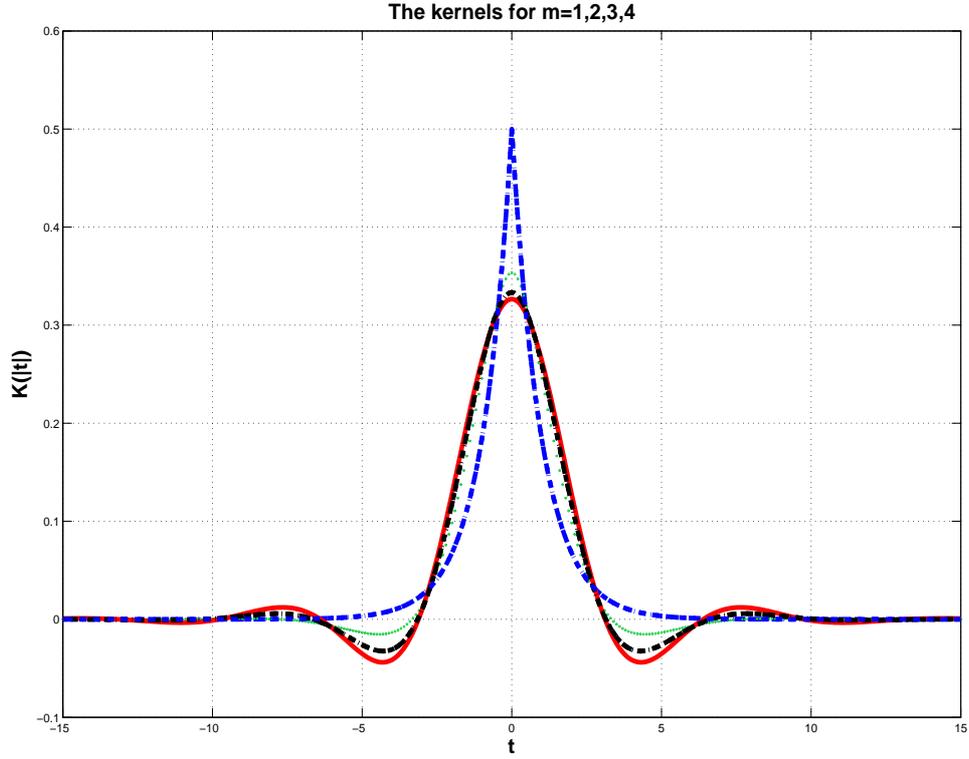, width=6.0 in, height =4.4 in, }
        \caption{The equivalent kernels for $m=1, 2, 3, 4$: (i) $m=1$: the dashed line; (ii) $m=2$: the dotted line;
         (iii) $m=3$: the dashdot line; (iv) $m=4$: the solid line.}
        \label{fig:kernels}
      \end{center}
\end{figure}

\begin{example} \label{example:kernel} \rm
As an illustration, the closed-form expressions of the first four
equivalent kernels are given below and their plots  are shown in
Figure~\ref{fig:kernels}, respectively.
\begin{eqnarray*}
   m=1: \ \ K(t)  & = & \frac{1}{2} e^{-|t|} \\
   m=2: \ \ K(t)  & = & \frac{1}{2\sqrt{2}} \, e^{-\frac{1}{\sqrt{2}} |t|}
    \Big( \, \cos\frac{|t|}{\sqrt{2}} + \sin \frac{|t|}{\sqrt{2}} \, \Big) \\
   m=3: \ \ K(t)  & = & \frac{1}{6} e^{-|t|} +  \, e^{-\frac{1}{2} |t|}
    \Big( \, \frac{1}{6} \cos\frac{\sqrt{3}|t|}{2} + \frac{\sqrt{3}}{6} \sin\frac{\sqrt{3}|t|}{2}
     \, \Big) \\
   m=4: \ \ K(t)  & = &  e^{-0.9239 |t|}
    \Big( \, 0.2310 \cos(0.3827|t|) + 0.0957 \sin(0.3827|t|)
     \, \Big)  \\
   & &  \, + \, e^{-0.3827 |t|} \Big( \, 0.0957  \cos(0.9239|t|) + 0.2310 \sin(0.9239|t|)
     \, \Big)
\end{eqnarray*}

\end{example}

%
\subsection{Boundary conditions} \label{sect:boundary_conditions}

Recall that the boundary conditions for the ODE~(\ref{eqn:govern})
are $F^{(i)}(0) = 0$, $F^{(i)}(1) = G^{(i)}(1)$, $i=0, \cdots,
m-1$. In the following, we consider an even $m$ first. In this
case,  the homogeneous ODE: $ F^{(2m)} + \beta^{2m} F =0$ has the
following $2m$ (linearly independent) solutions:
\[
  e^{-\beta\mu_k t} \cos(\beta\omega_k t), \ \ \ e^{-\beta\mu_k t} \sin(\beta\omega_k
  t), \ \ \ \ e^{-\beta\mu_k(1- t)} \cos(\beta\omega_k t), \ \ \ e^{-\beta\mu_k (1-t)}
  \sin(\beta\omega_k t),
\]
where $k=0, \cdots, \frac{m}{2}-1$ and $\mu_k, \omega_k>0$ for the
above $k$. The solution to ODE (\ref{eqn:govern}) subject to the
boundary conditions can be written as
\begin{equation}\label{eqn:F_t}
   F(t) = \underbrace{ \int^1_{0} L(|t-s|) G(s) ds }_{F_0(t)} + J(t),
\end{equation}
where
\begin{equation}\label{eqn:J_t1}
J(t) =  \sum^{\frac{m}{2}-1}_{k=0} \Big\{  e^{-\beta\mu_k t}\big[ a_k
    \cos(\beta\omega_k t) + b_k \sin(\beta\omega_k t) \big] + e^{-\beta\mu_k(1- t)}
   \big[ a^+_k \cos(\beta\omega_k t) +  b^+_k \sin(\beta\omega_k
   t) \big] \Big\},
\end{equation}
and the coefficients $a_k, b_k, a^+_k, b^+_k$ are to be
determined from the boundary conditions, and the kernel $L$ is
given in (\ref{eqn:L_even}). Define $\displaystyle \| G\| \equiv
\sup_{t\in[0, 1]} | G(t)|$. Let ${\bf G}=\big(\| G \|, G(1),
G'(1), \cdots, G^{(m-1)}(1) \big)$, and \begin{equation}\label{eqn:a}{\bf a}\, = \, \Big( \,
a_0, b_0, \cdots, a_{\frac{m}{2}-1}, b_{\frac{m}{2}-1}, a^+_0,
b^+_0, \cdots, a^+_{\frac{m}{2}-1}, b^+_{\frac{m}{2}-1} \,
\Big)^T\end{equation} be the coefficient vector.

By making use of the boundary conditions, we obtain the linear
equation $B^e {\bf a} = \bf v$, where ${\bf v}^T = [{\bf v_0},
{\bf v_1}]$,
$${\bf v_0} = \Big[\,  -F_0(0), \, -\frac{F'_0(0)}{\beta}, \, \cdots,
     \,-\frac{F^{(m-1)}_0(0)}{\beta^{m-1}} \, \Big],$$
$${\bf v_1} =\Big[ \, -F_0(1)+G(1), \, \frac{-F'_0(1) + G'(1)}{\beta}, \, \cdots,
      \, \frac{-F^{(m-1)}_0(1) + G^{(m-1)}(1)}{\beta^{m-1}} \, \Big ],$$
and
\[
  B^e = \begin{bmatrix} B^e_{11} & B^e_{12} \\  B^e_{21} &
     B^e_{22} \end{bmatrix}.
\]
Here the matrix blocks $B^e_{ij} \in \mathbb R^{m\times m}$ are
obtained via the similar technique in
Section~\ref{sect:kernel_even} as
\begin{eqnarray} \label{eqn:B_11}
  \lefteqn{ B^e_{11} = } \nonumber \\
 & {\small \begin{bmatrix} 1 & 0 &  1 & 0 & \cdots & \cdots & 1 & 0 \\
               \cos(\eta_1) & \sin(\eta_1) &  \cos(3\eta_1) & \sin(3\eta_1) & \cdots & \cdots & \cos((m-1)\eta_1) &
               \sin((m-1)\eta_1) \\
                 \vdots &  \vdots & \vdots & \vdots & & &\vdots  & \vdots \\
               \cos(\eta_{m-1}) & \sin(\eta_{m-1}) &  \cos(3\eta_{m-1}) & \sin(3\eta_{m-1}) & \cdots & \cdots & \cos((m-1)\eta_{m-1}) &
               \sin((m-1)\eta_{m-1})
              \end{bmatrix}, } \nonumber \\
\end{eqnarray}
where $\eta_k = k (\pi-\frac{\pi}{2m}), \, k=1, \cdots, m-1$, and
\begin{eqnarray*}
 \lefteqn{ B^e_{22} \, = \,  }  \\
  & {\small \begin{bmatrix}  \cos(\psi_{0,0}) & \sin(\psi_{0, 0}) & \cos(\psi_{1,0})
     & \sin(\psi_{1,0}) & \cdots & \cdots
       &   \cos(\psi_{{m\over 2}-1, 0}) & \sin(\psi_{ {m\over 2}-1, 0}) \\
       \cos(\psi_{0,1}) & \sin(\psi_{0,1}) & \cos(\psi_{1,1})
    & \sin(\psi_{1,1}) & \cdots & \cdots
       &   \cos(\psi_{ {m\over 2}-1, 1}) & \sin(\psi_{{m\over 2}-1, 1}) \\
          \vdots & \vdots &\vdots &  \vdots  & &  & \vdots & \vdots \\
       \cos(\psi_{0, m-1}) & \sin (\psi_{0, m-1}) & \cos(\psi_{1, m-1})
    & \sin(\psi_{1,m-1}) & \cdots & \cdots
       &   \cos(\psi_{{m\over 2}-1, m-1}) & \sin (\psi_{{m\over 2}-1, m-1})
 \end{bmatrix} },
\end{eqnarray*}
where $\eta^+_k =  \frac{k\pi}{2m}$ for $k=0, \cdots, m-1$, and
$\psi_{j, \ell} = \beta\, \omega_j + (2j+1)\eta^+_{\ell}$ for all
$j=0, \cdots, \frac{m}{2}, \, \ell=0, \cdots, m-1 $, and each
entry of $B^e_{12}$ and $B^e_{21}$ is of order $O(e^{-\beta})$.

\begin{lemma} \label{lem:banded_equation_even}
  Given an even $m$.
  There exist positive real numbers $\beta_*$ and $\varrho$, dependent on $m$ only, such
  that for all $\beta \ge \beta_*$, the coefficient vector $\bf a$ is unique
  and satisfies $\| \bf a \| \le \varrho \, \| \bf G \|$.
\end{lemma}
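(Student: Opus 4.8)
The plan is to read $B^e{\bf a}={\bf v}$ as a small perturbation of its block-diagonal part. Write $B^e=D+E$ with $D=\mathrm{diag}(B^e_{11},B^e_{22})$ and $E$ carrying only the off-diagonal blocks $B^e_{12},B^e_{21}$; since each entry of $B^e_{12},B^e_{21}$ is $O(e^{-\beta})$, $\|E\|\to0$ as $\beta\to\infty$. Everything then reduces to two facts, each with a constant depending only on $m$: \emph{(i)} $D$ is invertible with $\|D^{-1}\|\le c_m$ uniformly in $\beta$; and \emph{(ii)} $\|{\bf v}\|\le C_m\|{\bf G}\|$. Given these, one picks $\beta_*\ge1$ so large that $\|D^{-1}\|\,\|E\|\le\tfrac12$ whenever $\beta\ge\beta_*$; then $B^e=D(I+D^{-1}E)$ is invertible, $\|(B^e)^{-1}\|\le2c_m$, hence ${\bf a}=(B^e)^{-1}{\bf v}$ is unique and $\|{\bf a}\|\le2c_mC_m\|{\bf G}\|=:\varrho\|{\bf G}\|$.

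To prove (i), first note $B^e_{11}$ is independent of $\beta$; its $i$th row ($i=0,\dots,m-1$) consists of the numbers $\cos((2k+1)\eta_i)$, $\sin((2k+1)\eta_i)$ with $\eta_i=i(\pi-\tfrac{\pi}{2m})$, and — using $A_k=M((2k+1)(\pi-\tfrac{\pi}{2m}))$ as in the proof of Lemma~\ref{lem:coefficient} — this row equals $\beta^{-i}$ times the vector of $i$th derivatives at $t=0$ of the $m$ functions $e^{-\beta\mu_k t}\cos(\beta\omega_k t)$ and $e^{-\beta\mu_k t}\sin(\beta\omega_k t)$, $k=0,\dots,\tfrac m2-1$. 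Thus $\mathrm{diag}(1,\beta,\dots,\beta^{m-1})\,B^e_{11}$ is the Wronskian matrix at $t=0$ of these functions, which are linearly independent solutions of the order-$m$ constant-coefficient ODE whose characteristic roots are $(-\mu_k\pm\imath\omega_k)\beta$; since the map from a solution of an order-$m$ linear ODE to its derivatives of orders $0,\dots,m-1$ at a point is a linear isomorphism, this Wronskian is nonsingular, so $\det B^e_{11}\neq0$ and, being a fixed matrix, $\|(B^e_{11})^{-1}\|$ depends only on $m$. (Equivalently: $B^e_{11}{\bf x}=0$ would force a trigonometric polynomial with only the odd frequencies $1,3,\dots,m-1$ to vanish at the $m$ distinct points $\eta_0,\dots,\eta_{m-1}$; after multiplication by $e^{\imath(m-1)\eta}$ this is a polynomial in $e^{2\imath\eta}$ of degree $m-1$ with $m$ distinct unit-modulus roots, hence identically zero.) For $B^e_{22}$ I would peel off the $\beta$-dependence: because $\psi_{j,\ell}=(2j+1)\eta^+_\ell+\beta\omega_j$, its $j$th column-pair is the $j$th column-pair of the $\beta$-independent matrix $B^{e,0}_{22}$ (entries $\cos((2j+1)\eta^+_\ell)$, $\sin((2j+1)\eta^+_\ell)$) right-multiplied by $M(\beta\omega_j)\in\mathrm{SO}(2)$, so $B^e_{22}=B^{e,0}_{22}\,\mathrm{diag}\big(M(\beta\omega_0),\dots,M(\beta\omega_{m/2-1})\big)$ with the second factor orthogonal; hence $B^e_{22}$ is invertible iff $B^{e,0}_{22}$ is, and $\|(B^e_{22})^{-1}\|=\|(B^{e,0}_{22})^{-1}\|$. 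Finally $B^{e,0}_{22}$ is the Wronskian at $t=0$ of the $m$ linearly independent functions $e^{\mu_j t}\cos(\omega_j t),e^{\mu_j t}\sin(\omega_j t)$, hence nonsingular by the same principle, so $\|(B^e_{22})^{-1}\|$ also depends only on $m$, and (i) holds with $c_m=\max(\|(B^e_{11})^{-1}\|,\|(B^e_{22})^{-1}\|)$.

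For (ii), recall the components of ${\bf v}$ are $-F_0^{(i)}(0)/\beta^i$ and $(G^{(i)}(1)-F_0^{(i)}(1))/\beta^i$, $i=0,\dots,m-1$, where $F_0(t)=\int_0^1L(|t-s|)G(s)\,ds=\int_0^tL(t-s)G(s)\,ds+\int_t^1L(s-t)G(s)\,ds$. Since $L^{(k)}(0)=0$ for the odd orders $k=1,3,\dots,2m-3$ by (\ref{eqn:L}), the boundary terms produced by Leibniz's rule in successive differentiations of $F_0$ (which are multiples of $L^{(j)}(0)$ with $j$ odd, $j\le 2m-3$, or cancel by parity) all vanish, so $F_0^{(i)}(t)=\int_0^tL^{(i)}(t-s)G(s)\,ds+(-1)^i\int_t^1L^{(i)}(s-t)G(s)\,ds$ for every $i\le 2m-2$, in particular for $i\le m-1$. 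The coefficients $c_k,d_k$ of $L$ are the fixed numbers from Lemma~\ref{lem:coefficient}, so each summand of $L$ has the form $\beta\,h_k(\beta t)$ with $h_k$ a fixed decaying trigonometric function; hence $|L^{(i)}(u)|\le C\,\beta^{i+1}e^{-\mu_{\min}\beta u}$ for $u\ge0$, with $\mu_{\min}=\min_k\mu_k>0$ and $C$ depending only on $m$ and $i$, and therefore $|F_0^{(i)}(t)|\le\|G\|\int_0^1|L^{(i)}(|t-s|)|\,ds\le(2C/\mu_{\min})\,\beta^i\|G\|$. Using $\beta\ge\beta_*\ge1$, every component of ${\bf v}$ is thus at most a constant depending only on $m$ times $\|G\|+\max_{0\le i\le m-1}|G^{(i)}(1)|\le2\|{\bf G}\|$, which gives (ii) and completes the argument.

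The hardest part is (i), the $\beta$-uniform invertibility of the diagonal blocks: one has to recognize that $B^e_{11}$ is, up to a row rescaling, a Wronskian — so nonsingularity is automatic and $\beta$-free — and that the only $\beta$-dependence of $B^e_{22}$ is a harmless right multiplication by a block-diagonal rotation, which collapses the uniform bound to a single fixed nonsingularity statement. The estimates on $F_0$ in (ii) are routine but must be carried out with the kink of $L(|\cdot|)$ in mind, which is precisely where the vanishing of the low odd-order derivatives $L^{(k)}(0)$ in (\ref{eqn:L}) enters.
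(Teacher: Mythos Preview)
Your proof is correct and follows the same overall architecture as the paper: write $B^e$ as its block-diagonal part plus an $O(e^{-\beta})$ off-diagonal perturbation, show the diagonal blocks $B^e_{11}$ and $B^e_{22}$ are invertible with $\beta$-independent bounds (for $B^e_{22}$, by factoring out a block-diagonal rotation in $\mathrm{SO}(2)$ to reduce to a fixed matrix), and bound $\|{\bf v}\|$ by $\|{\bf G}\|$ via exponential decay of $L$ and its derivatives.

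The only substantive difference is in how nonsingularity of the fixed matrices $B^e_{11}$ and $B^{e,0}_{22}$ is established. The paper introduces a column-swap matrix $C_{11}$ with $C_{11}=B^e_{11}J$, observes that $B^e_{11}+\imath C_{11}$ equals a diagonal matrix times a Vandermonde matrix (in the nodes $-e^{\pm\imath(2k+1)\pi/(2m)}$), and deduces invertibility of $B^e_{11}$ from that of $I+\imath J$. You instead recognize $\mathrm{diag}(1,\beta,\dots,\beta^{m-1})B^e_{11}$ as the Wronskian at $0$ of $m$ linearly independent solutions of an order-$m$ constant-coefficient ODE, which is automatically nonsingular; your parenthetical trigonometric-polynomial argument is a third route. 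All three are valid. Your Wronskian approach is more conceptual and transfers verbatim to $B^{e,0}_{22}$, while the paper's Vandermonde trick is more explicit and in principle yields a computable lower bound on $|\det B^e_{11}|$. For the full matrix, you use a Neumann-series perturbation bound where the paper uses $\det(B^e)=\det(B^e_{11})\det(B^e_{22})+O(e^{-m\beta})$ together with the adjugate formula; these are interchangeable here.
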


\begin{proof}
Note that for $\beta$ sufficiently large, each element of
$B^e_{12}$ and $B^e_{21}$ is sufficiently small. Hence it suffices
to show that $B^e_{11}$ and $B^e_{22}$ are invertible. For this
end, let $B^e_{11}(k)$ denote the $k$th column of $B^e_{11}$.
Define $C_{11} \equiv \big[ B^e_{11}(2) \ B^e_{11}(1) \
B^e_{11}(4) \ B^e_{11}(3) \ \cdots \ B^e_{11}(m) \ B^e_{11}(m-1)
\big]$. Letting $\vartheta=\frac{\pi}{2m}$, it can be verified
that
\begin{eqnarray*}
 \lefteqn{ B^e_{11} +\imath C_{11} \,  = \,  } \\
 & {\footnotesize   \begin{bmatrix} 1 & \imath &  1 & \imath & \cdots & \cdots & 1 & \imath \\
     -e^{-\imath \vartheta} & -\imath e^{\imath \vartheta} & -e^{-\imath 3\vartheta} & -\imath e^{\imath 3\vartheta} &
         \cdots & \cdots & -e^{-\imath (m-1)\vartheta}
      & - \imath  e^{\imath (m-1)\vartheta} \\
      \vdots& \vdots & \vdots & \vdots &   & &  \vdots & \vdots \\
        (-e^{-\imath\vartheta})^{m-1} &  \imath (- e^{\imath \vartheta})^{m-1}
        & (-e^{-\imath3\vartheta})^{m-1} &  \imath (- e^{\imath 3\vartheta})^{m-1} &
\cdots & \cdots &
      (-e^{-\imath (m-1)\vartheta})^{m-1}
      &  \imath (- e^{\imath (m-1)\vartheta})^{m-1}
      \end{bmatrix}. }
\end{eqnarray*}
Therefore $B^e_{11} +\imath C_{11}$ can be written as $
\mbox{diag}(1, \imath, 1, \imath, \cdots, 1, \imath) V$, where $V$
is an invertible Vandermonde matrix. This implies that $B^e_{11}
+\imath C_{11}$ is invertible. On the other hand, by noting
$C_{11} = B^e_{11} J$, where $J=\mbox{diag}\underbrace{ (J_*,
\cdots, J_*)}_{\frac{m}{2}-\mbox{copies}}$ with $J_*=
\begin{bmatrix} 0 & 1
\\ 1 & 0 \end{bmatrix}$, $B^e_{11} +\imath C_{11}= B^e_{11}(I+
\imath J) $. It is easily seen that $I+ \imath J$ is invertible,
so is $B^e_{11}$. To show the invertibility of $B^e_{22}$, it is
noticed that $B^e_{22}= \wt B^e_{22} R$, where $\wt B^e_{22}$ is
similar to  $B^e_{11}$ defined in (\ref{eqn:B_11}) with $\eta_i$
replaced by $\eta^+_i$ and $R = \mbox{diag}\big(M(\beta \omega_0),
M(\beta\omega_1), \cdots, M(\beta\omega_{\frac{m}{2}}) \big)$
where $M(\cdot)$ is given in (\ref{eqn:M_matrix}). Clearly $R$ is
invertible for all $\beta$, and it can be proved in the similar
way as for $B^e_{11}$ that $\wt B^e_{22}$ is nonsingular. Hence,
$B^e_{22}$ is invertible for all $\beta$. Consequently $\det(B^e)
=\det(B^e_{11}) \det(B^e_{22}) + O(e^{-m\beta}) \ne 0$ for all
$\beta$ sufficiently large. In addition, since each entry of the
adjoint of $B^e$ is bounded, we deduce that $(B^e)^{-1} = \frac{
\mbox{adj}(B^e) }{\det(B^e)}$ is bounded and the upper bound
depends on $m$ only, where $\mbox{adj}(B^e)$ stands for the
adjoint of $B^e$. Furthermore, letting $\kappa=\max_k(|c_k|,
|d_k|)$, where $c_k, d_k$ are the coefficients in the kernel $L$,
and $\varrho=\min\{ \mu_k, \, k=0, \cdots, \frac{m}{2}-1\}$, we
have, for $t_*=0$ or $1$,
\[
    \frac{ \big| F^{(j)}_0 (t_*) \big| }{ \beta^j } \, \le \, 2m
    \kappa \int^1_0 \beta e^{-\beta \varrho \tau} d\tau \, \| G\| \le 2m
    \kappa/\varrho  \, \| G \|, \ \ \ \ \forall \ j=1, \cdots, m-1.
\]
As a result, the equation $B^e x = \bf v$ has a unique solution
$\bf a$ that satisfies the desired bound.
\end{proof}

Consider an odd $m$. The homogeneous ODE: $ F^{(2m)} - \beta^{2m}
F =0$ has the following $2m$ (linearly independent) solutions:
\[
 e^{\pm\beta t}, \ \  e^{-\beta\mu_k t} \cos(\beta\omega_k t), \ \ \ e^{-\beta\mu_k t} \sin(\beta\omega_k
  t), \ \ \ \ e^{-\beta\mu_k(1- t)} \cos(\beta\omega_k t), \ \ \ e^{-\beta\mu_k (1-t)}
  \sin(\beta\omega_k t),
\]
where $k=1, \cdots, \frac{m-1}{2}$ and $\mu_k, \omega_k>0$ for the
above $k$. The solution to ODE (\ref{eqn:govern}) subject to the
boundary conditions can be written as
\begin{equation} \label{eqn:F_odd}
   F(t) \, = \, \underbrace{ \int^1_{0} P(|t-s|) G(s) ds }_{F_0(t)} + J(t),
\end{equation}
where
\begin{eqnarray} \label{eqn:J_t2}
J(t)  &=& a_0 e^{-\beta t} + a^+_0  e^{-\beta (1-t)} + \sum^{\frac{m-1}{2}}_{k=1} \Big\{ e^{-\beta\mu_k t}\big[ a_k
    \cos(\beta\omega_k t) + b_k \sin(\beta\omega_k t) \big] \nonumber \\
     && ~~~~~~~~~~~~~~~+ \ e^{-\beta\mu_k(1- t)}
   \big[ a^+_k \cos(\beta\omega_k t) +  b^+_k \sin(\beta\omega_k
   t) \big] \Big\},
   \end{eqnarray}
and the coefficients $a_k, b_k, a^+_k, b^+_k$ are to be
determined from the boundary conditions, and the kernel $P$ is
given in (\ref{eqn:P_odd}). Let
\begin{equation}\label{equ:b}
 {\bf b} \, = \,
 \Big(a_0, a_1, b_1, \cdots, a_{\frac{m-1}{2}}, b_{\frac{m-1}{2}}, a^+_0, a^+_1, b^+_1, \cdots,
    a^+_{\frac{m-1}{2}}, b^+_{\frac{m-1}{2}} \Big)^T
\end{equation}
be the coefficient vector. Similar to the case where $m$ is even,
we obtain the linear equation $B^o {\bf b} = \bf v$, where ${\bf
v}^{T}=[{\bf v_0}, {\bf v_1}]$
 and
\[
  B^o \, = \, \begin{bmatrix} B^o_{11} & B^o_{12} \\  B^o_{21} &
     B^o_{22} \end{bmatrix}.
\]
Here the matrix blocks $B^o_{ij} \in \mathbb R^{m\times m}$ are
obtained via the similar technique in
Section~\ref{sect:kernel_odd} as
\begin{eqnarray} \label{eqn:Bo_11}
 \lefteqn{ B^o_{11} \,  = \, } \nonumber \\
 & {\small \begin{bmatrix} 1 & 1 & 0 & \cdots & \cdots & 1 & 0 \\
               -1 & \cos(\gamma_1) & \sin(\gamma_1) &  \cdots & \cdots & \cos(\gamma_{\frac{m-1}{2}}) &
               \sin(\gamma_{\frac{m-1}{2}}) \\
                \vdots & \vdots &  \vdots & & & \vdots & \vdots \\
               (-1)^{m-1} & \cos((m-1)\gamma_1) & \sin((m-1)\gamma_1) &  \cdots & \cdots & \cos((m-1)\gamma_{\frac{m-1}{2}}) &
               \sin((m-1)\gamma_{\frac{m-1}{2} })
              \end{bmatrix}, } \nonumber \\
\end{eqnarray}
where $\gamma_k = (\pi-\frac{k\pi}{m}), k=1, 2, \cdots,
\frac{m-1}{2}$, and
\begin{eqnarray*}
  B^o_{22} \, = \, \begin{bmatrix} 1 & \cos(\zeta_{1,0}) & \sin(\zeta_{1,0}) & \cdots & \cdots
       &   \cos(\zeta_{\frac{m-1}{2}, 0}) & \sin(\zeta_{\frac{m-1}{2}, 0}) \\
      1 & \cos(\zeta_{1, 1}) & \sin(\zeta_{1, 1}) & \cdots & \cdots
       &   \cos(\zeta_{\frac{m-1}{2}, 1}) & \sin(\zeta_{\frac{m-1}{2}, 1}) \\
        \vdots &  \vdots & \vdots & &   & \vdots & \vdots \\
       1 & \cos(\zeta_{1, m-1}) & \sin (\zeta_{1, m-1}) & \cdots & \cdots
       &   \cos(\zeta_{\frac{m-1}{2}, m-1}) & \sin (\zeta_{\frac{m-1}{2}, m-1})
 \end{bmatrix},
\end{eqnarray*}
where   $\gamma^+_k = \frac{k\pi}{m}, \, \zeta_{k, \ell} = \beta\,
\omega_k + \ell\gamma^+_k$ for all $k=1, 2, \cdots, \frac{m-1}{2},
\, \ell=0, 1, \cdots, m-1 $, and each entry of $B^o_{12}$ and
$B^o_{21}$ is of order $O(e^{-\beta})$. To show the invertibility
of $B^o_{11}$, we introduce $E_{11} \equiv \big[ 0 \ B^o_{11}(3) \
B^o_{11}(2) \ B^o_{11}(5) \ B^o_{11}(4) \ \cdots \ B^o_{11}(m) \
B^e_{11}(m-1) \big]$, where $B^o_{11}(k)$ denotes the $k$th column
of $B^o_{11}$. As before it can be shown that $B^o_{11}+ \imath
E_{11}$ is nonsingular and $B^o_{11}+ \imath E_{11}= B^o_{11} (I +
\imath K)$, where $K\equiv \mbox{diag}(0, \underbrace{ J_*,
\cdots, J_*}_{\frac{m-1}{2}-\mbox{copies}})$ and $J_*$ is the
$2\times 2$ matrix defined before. Since $I+\imath K$ is
nonsingular, so is $B^o_{11}$. Furthermore, by applying the
similar technique, we can show that $B^o_{22}$ is invertible for
all $\beta$. This thus implies that for all $\beta$ sufficiently
large, $B^o$ is invertible and each entry of $(B^o)^{-1}$ is
bounded by a positive number depending on $m$ only. We summarize
the above discussions as follows:

\begin{lemma} \label{lem:banded_equation_odd}
  Given an odd $m$.
  There exist positive real numbers $\beta_*$ and $\varrho$, dependent on $m$ only, such
  that for all $\beta \ge \beta_*$, the coefficient vector $\bf b$ is unique
  and satisfies $\| \bf b \| \le \varrho \, \| \bf G \|$.
\end{lemma}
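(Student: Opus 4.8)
The plan is to mirror the argument just carried out for the even case (Lemma~\ref{lem:banded_equation_even}), exploiting the block structure
\[
  B^o = \begin{bmatrix} B^o_{11} & B^o_{12} \\ B^o_{21} & B^o_{22} \end{bmatrix},
\]
where every entry of the off-diagonal blocks $B^o_{12}$ and $B^o_{21}$ is $O(e^{-\beta})$. Thus the first step is to reduce the problem to showing that the diagonal blocks $B^o_{11}$ and $B^o_{22}$ are invertible with inverses whose norms are bounded by a constant depending on $m$ only; then, for $\beta$ large enough, a standard perturbation/Schur-complement estimate gives $\det(B^o) = \det(B^o_{11})\det(B^o_{22}) + O(e^{-m\beta}) \neq 0$, and the entries of $\mathrm{adj}(B^o)$ are uniformly bounded, so $(B^o)^{-1}$ is bounded uniformly in $\beta \ge \beta_*$.

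The second step is the invertibility of $B^o_{11}$. Here I would reuse the trick from the even case: introduce the auxiliary matrix $E_{11}$ (the column-permuted variant flagged in the text with a $0$ first column and swapped cosine/sine columns) and check by a direct trigonometric computation, as in the proof of Lemma~\ref{lem:banded_equation_even}, that $B^o_{11} + \imath E_{11}$ equals $\mathrm{diag}(1, \imath, 1, \cdots)$ times an invertible Vandermonde matrix in the nodes $\{-1\} \cup \{ e^{\pm \imath \gamma_k} \}$ (these nodes are distinct because the $\gamma_k = \pi - k\pi/m$ are distinct and none equals $0$ or $\pi$), hence nonsingular. Then observing $E_{11} = B^o_{11} K$ with $K = \mathrm{diag}(0, J_*, \cdots, J_*)$ and $J_* = \begin{bmatrix} 0 & 1 \\ 1 & 0 \end{bmatrix}$, we get $B^o_{11}(I + \imath K)$ nonsingular; since $I + \imath K$ is visibly invertible (its determinant is $(1 - \imath^2)^{(m-1)/2} \neq 0$), so is $B^o_{11}$. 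For $B^o_{22}$, I would write $B^o_{22} = \wt B^o_{22} R$ where $R = \mathrm{diag}(1, M(\beta\omega_1), \cdots, M(\beta\omega_{(m-1)/2}))$ with $M(\cdot) \in \mathrm{SO}(2)$ as in (\ref{eqn:M_matrix}), and $\wt B^o_{22}$ has the Vandermonde-type structure of $B^o_{11}$ with $\gamma_k$ replaced by $\gamma^+_k = k\pi/m$; since $R$ is orthogonal (hence invertible) for every $\beta$ and $\wt B^o_{22}$ is nonsingular by the same Vandermonde argument, $B^o_{22}$ is invertible for all $\beta$.

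The third step is the norm bound $\|\mathbf{b}\| \le \varrho\|\mathbf{G}\|$. Once $(B^o)^{-1}$ is uniformly bounded, it remains to bound $\|\mathbf{v}\|$ in terms of $\|\mathbf{G}\|$. The component $\|G\|$ and the values $G^{(i)}(1)$ appear directly; for the terms $F_0^{(j)}(t_*)/\beta^j$ with $t_* \in \{0,1\}$ I would differentiate $F_0(t) = \int_0^t P(t-s)G(s)\,ds + \int_t^1 P(s-t)G(s)\,ds$, note that $P^{(j)}$ is again a combination of exponentials $\beta e^{-\beta t}$ and $\beta e^{-\beta\mu_k t}(\cdots)$ all decaying at rate at least $\varrho\beta$ with $\varrho = \min_k \mu_k > 0$ (and $\mu_0 = 1$ for the pure-exponential term), and estimate $\int_0^1 \beta e^{-\beta\varrho\tau}\,d\tau \le 1/\varrho$, exactly as in the even case. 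This yields $|F_0^{(j)}(t_*)|/\beta^j \le (2m\kappa/\varrho)\|G\|$ with $\kappa = \max_k(|c_k|,|d_k|)$, so $\|\mathbf{v}\| \le c(m)\|\mathbf{G}\|$, and combining with $\|(B^o)^{-1}\| \le c'(m)$ gives the claim.

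The main obstacle is the Vandermonde/nonsingularity bookkeeping for $B^o_{11}$: the odd case has the extra pure-exponential solution $e^{\pm\beta t}$, which breaks the clean $2\times 2$ block pattern of the even case and forces the special first column $((-1)^{i-1})_i$ and the extra $0$-padded column in $E_{11}$; one must verify carefully that appending the node $-1$ (equivalently the column $(1,-1,1,\cdots)^T$) to the Vandermonde system in the $e^{\pm\imath\gamma_k}$ still leaves all $m$ nodes distinct and that the row-scaling $\mathrm{diag}(1,\imath,1,\cdots)$ and the factor $I + \imath K$ come out correctly — this is precisely the point where the argument genuinely differs from Lemma~\ref{lem:banded_equation_even} rather than being a verbatim transcription. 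Everything else (the off-diagonal perturbation, the $R$-factorization of $B^o_{22}$, and the $\mathbf{v}$-estimate) is routine given the even-case template.
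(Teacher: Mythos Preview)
Your proposal is correct and follows essentially the same approach as the paper: the paper's argument (given in the paragraph immediately preceding the lemma) uses precisely the auxiliary matrix $E_{11}=\big[\,0\ B^o_{11}(3)\ B^o_{11}(2)\ \cdots\big]$ together with the factorization $B^o_{11}+\imath E_{11}=B^o_{11}(I+\imath K)$ with $K=\mbox{diag}(0,J_*,\ldots,J_*)$, handles $B^o_{22}$ ``by the similar technique'' (your $R$-factorization is exactly that), and then invokes the same perturbation and $\mathbf v$-bound as in Lemma~\ref{lem:banded_equation_even}. Your identification of the extra pure-exponential node $-1$ and the resulting $0$-padded first column of $E_{11}$ as the only nontrivial modification is spot on.
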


%
\section{Asymptotic properties of $P$-splines}\label{sec:asy}

To establish the asymptotic properties of the estimator, we first
represent $\hat F_m$ as the sum of the convolutions of $K(t, s)$
(defined in Section \ref{sec:gr}) with $G_m$ and a remainder term
that is of smaller order.

\begin{lemma}\label{lem:repF}
The $\hat F_m$ in (\ref{equ:ode}) can be represented as
\begin{equation}\label{equ:repF}
\hat{F}_m(t) =\int_0^1 K(s, t)G_m(s)ds + \int_0^1 K(s, t)\tilde R(s)ds + J(t),
\end{equation}
where $J(t)$ is given by (\ref{eqn:J_t1}) and (\ref{eqn:J_t2}) for even $m$ and odd $m$,  respectively.
The $\|\cdot\|_{\infty}$-norms of both coefficient vectors ${\bf a}$ in (\ref{eqn:a}) and  ${\bf b}$ in (\ref{equ:b})
are stochastically bounded, and $\|\tilde R\|  =O_p\Big( \big({\log K_n\over nK_n} \big)^{1/2} \Big)$.
\end{lemma}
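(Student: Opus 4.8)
The plan is to combine the explicit Green's-function representation from Section~\ref{sec:gr} with the ODE characterization of $\hat F_m$ in (\ref{equ:ode}) and the boundary conditions listed just below it. First I would observe that $\hat F_m$ solves the inhomogeneous equation $(-1)^m\alpha\hat F_m^{(2m)}+\hat F_m = G_m+\tilde R$ on $[0,1]$, so by the general theory of Sections~\ref{sect:kernel_even}--\ref{sect:boundary_conditions} (applied with $G$ replaced by $G_m+\tilde R$ and $\beta = \alpha^{-1/(2m)}$), any solution can be written as $\hat F_m(t) = F_0(t) + J(t)$, where $F_0(t)=\int_0^1 K(s,t)\bigl(G_m(s)+\tilde R(s)\bigr)ds$ is the particular solution built from the kernel $L$ (even $m$) or $P$ (odd $m$), and $J(t)$ is the homogeneous correction of the form (\ref{eqn:J_t1}) or (\ref{eqn:J_t2}) that enforces the boundary data. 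Splitting $F_0$ by linearity gives the two convolution integrals in (\ref{equ:repF}). So the representation itself is essentially a bookkeeping step; the content is in the two norm bounds.

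For the coefficient vectors: the boundary conditions for $\hat F_m$ are $\hat F_m^{(k)}(0)=0$ and $\hat F_m^{(k)}(1)=G_{m-k}(1)+e_{m-k}$ for $k=0,\dots,m-1$, which differ from the idealized conditions in (\ref{eqn:govern}) only through the errors $e_{m-k}=\hat F_m^{(k)}(1)-\check F_{m-k}(1)$. Feeding these into the linear systems $B^e{\bf a}={\bf v}$ or $B^o{\bf b}={\bf v}$, the right-hand side ${\bf v}$ is controlled (after dividing by the appropriate powers of $\beta$) by $\|G_m+\tilde R\|$, by the derivatives $G_{m-k}(1)$, and by the $e_{m-k}$. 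By Lemmas~\ref{lem:banded_equation_even} and~\ref{lem:banded_equation_odd}, $(B^e)^{-1}$ and $(B^o)^{-1}$ are bounded by a constant depending only on $m$ once $\beta\ge\beta_*$, which holds eventually since $\beta=(nK_n^{2m-1}/\lambda^*)^{1/(2m)}\to\infty$ under the knot/penalty regime. Since $\|G_m\|$ and the $G_{m-k}(1)$ are $O_p(1)$ (they are iterated integrals of the data against the uniform measures, hence close to the corresponding $\Phi_k$ with an $O_p(n^{-1/2})$ fluctuation), and since the $e_{m-k}$ are $O_p(1/n)$ once $\hat f^{[p]}$ is shown stochastically bounded (as promised after (\ref{equ:ode})), we conclude $\|{\bf a}\|,\|{\bf b}\| = O_p(1)$. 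I would be slightly careful here about the mild circularity — the bound on $e_{m-k}$ uses stochastic boundedness of $\hat f^{[p]}$ — but this is resolved either by a bootstrap argument (first prove a crude $O_p$ bound, then refine) or by treating the $e_{m-k}$ as additional small perturbations whose effect is dominated by the $\|\tilde R\|$ term.

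The main obstacle, and the genuinely technical part, is the bound $\|\tilde R\| = O_p\bigl((\log K_n/(nK_n))^{1/2}\bigr)$. Recall $\tilde R(x) = \hat F_m(x)-G_m(x)+G_m(\kappa_{k_x+p})-\check F_m(\kappa_{k_x+p}) + R_{y,k_x+1}-R_{f,k_x+1}$, so it aggregates several discretization errors: (i) the difference between a function and its value at the nearest knot, which on an interval of width $K_n^{-1}$ contributes $O(K_n^{-1})$ deterministically plus a stochastic piece; (ii) the difference between the continuous integrals $\hat F_m,\ G_m$ and their Riemann-sum analogues $\check F_m$ (against $\omega_1$, then $\omega_2$); and (iii) the remainder terms $R_{yj}, R_{fj}$ coming from $C^{m-1}R\hat f$ and $C^{m-1}Ry$ with $R = CX^T-\tilde C$, which capture the B-spline localization error, i.e., the discrepancy between $\sum_k b_k B_k^{[p]}(t_i)$ summed against the design points and the piecewise-constant approximation. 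The deterministic parts of all of these are $O(K_n^{-m})$ or better after the $m$-fold integration (each integration against a measure of mesh $K_n^{-1}$ smooths by another factor), which is dominated by the stated rate. The stochastic parts are partial sums of the independent mean-zero errors $\epsilon_i$, scaled by $1/(nK_n^{m-1})$ and iterated-integrated; bounding their supremum over $x\in[0,1]$ uniformly is where the $\sqrt{\log K_n}$ enters, via a maximal inequality (a union bound over the $K_n$ knot-subintervals combined with a Gaussian tail bound, or equivalently a chaining/Dvoretzky–Kiefer–Wolfowitz-type estimate for the empirical process $G_1-\tilde\Phi_1$). I would carry this out by: writing $G_m - \tilde\Phi_m$ and $\hat F_m - $ (its discrete analogue) as explicitly as iterated sums of $\epsilon_i$; computing the variance at each fixed $x$, which is $O(1/(nK_n))$ because the $m-1$ extra integrations against $\omega_2$ contribute bounded factors while the innermost integration against $\omega_1$ contributes the $1/n$ and the single knot-spacing gives $1/K_n$ in the relevant term; and then upgrading pointwise control to uniform control by the maximal inequality, producing the extra $\log K_n$ factor. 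Checking that the B-spline remainder terms $R_{yj}$, $R_{fj}$ obey the same bound — using the partition-of-unity and compact-support properties of the $B_k^{[p]}$ together with (\ref{equ:derivative}) — is the fiddliest sub-step, but structurally identical.
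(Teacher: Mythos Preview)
Your treatment of the representation and of the coefficient vectors ${\bf a},{\bf b}$ matches the paper: both follow from Section~\ref{sec:gr} and Lemmas~\ref{lem:banded_equation_even}--\ref{lem:banded_equation_odd}. The substantive divergence is in the bound on $\|\tilde R\|$. The paper does \emph{not} decompose $\tilde R$ directly into deterministic discretization errors plus pure partial sums of $\epsilon_i$ and then apply a maximal inequality. Instead it imports the result of Claeskens, Krivobokova and Opsomer (2009) that $\|H^{-1}\|_\infty=O(1)$ for $H=(K_n/n)X^TX+\lambda D_m^TD_m$. This yields two things at once: (a) $\hat b$, and hence $\hat f^{[p]}$, is stochastically bounded---which cleanly dissolves the circularity you flag, without any bootstrap; and (b) introducing the noiseless fit $\bar f$ defined by $(X^TX+\lambda^*D_m^TD_m)\bar b=X^Tf$, one gets $\|\hat f^{[p]}-\bar f\|\le\|H^{-1}\|_\infty\|\bar y-\mathbb E\bar y\|_\infty=O_p\bigl((K_n\log K_n/n)^{1/2}\bigr)$. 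Combined with $\|\bar f-f\|=O(\alpha)$, the paper then routes the decomposition of $\tilde R$ through $\bar f$: each piece is either $O_p(1/n)$, a pure noise increment $O_p(1/\sqrt{nK_n})$, or $(1/K_n)\|\hat f^{[p]}-\bar f\|$, and the last of these gives exactly $O_p\bigl((\log K_n/(nK_n))^{1/2}\bigr)$.

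Your route has a gap precisely where you call the $\hat f$-dependent pieces ``structurally identical'' to the pure-noise pieces. The terms $\check F_m(\kappa_{k_x+p})$ and $R_{f,k_x+1}$ in $\tilde R$ involve $\hat f=X\hat b$ with $\hat b=(X^TX+\lambda^*D_m^TD_m)^{-1}X^Ty$; they are not partial sums of independent $\epsilon_i$, so a Gaussian maximal inequality on the $\epsilon_i$ alone does not control them. You still need a priori uniform control on $\hat f$ (and in fact on $\hat f-\bar f$, to get the sharp rate) before your decomposition closes. The bootstrap you sketch would have to produce, at the crude stage, a bound on $\|\hat f-\bar f\|$ tight enough to survive multiplication by $1/K_n$; it is not clear how to do that without something equivalent to the $\|H^{-1}\|_\infty$ bound. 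The paper's approach buys exactly this: an external, non-circular source of uniform control on the estimator.
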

\begin{proof}The representation of $\hat F_m$ in (\ref{equ:repF}) follows from the discussions in Section \ref{sec:gr}. The stochastic boundedness of the coefficient vectors is the direct applications of Lemma \ref{lem:banded_equation_even} and Lemma \ref{lem:banded_equation_odd}. Let $\bar y = {K_n\over n}X^Ty$ and $\lambda = \lambda^* K_n/n$. Claeskens et al. (2009) showed that $\|H^{-1}\|_{\infty} = O(1)$, where $H={K_n\over n}X^TX + \lambda D^T_mD_m$. Thus, $\hat b$ is stochastically bounded, so is $\hat f^{[p]}$. Let $\bar b$ solve  $(X^TX + \lambda^* D_m^TD_m)\bar{b} = X^T f$ and denote $\bar f(x) = \sum_{k=1}^{K_n+p}\bar b_k B_k^{(p)}(x)$.  We have
\begin{equation}
\|\hat f^{[p]} - \bar f\| \, \le \, \|\hat b - \bar b\|_{\infty}
\, \le \, \|H^{-1}\|_{\infty}\, \|\  \bar y - \mathbb E[\bar y]\
\|_{\infty} \, = \, O_p\Big(\sqrt{K_n\over n}\sqrt{2\log K_n}~
\Big).
\end{equation}
It is shown that  $\|\bar f - f\| =O(\alpha)$ if $p=m$.
The development of this result is a special case of Theorem
\ref{thm:main} in Section \ref{sec:asy}. Thus,
\begin{eqnarray*}
&& | \hat{F}_1(x)-G_1(x)+G_1(\kappa_{k_x+p})-\check{F}_1(\kappa_{k_x+p}) |
\\ &\le &  | \hat F_1(x) - \check F_1(x)| + | (G_1(\kappa_{k_x+p})-G_1(x))- (\Phi_1(\kappa_{k_x+p})-\Phi_1(x))|\\
&& + |(\Phi_1(\kappa_{k_x+p})-\Phi_1(x))-(\bar F_1(\kappa_{k_x+p})-\bar F_1(x))| + |(\bar F_1(\kappa_{k_x+p})-\bar F_1(x)) - (\check F_1(\kappa_{k_x+p})-\check F_1(x)) |\\
&\le & {2\over n}\|\hat f\| + O_p \Big({1\over \sqrt{nK_n}}\Big)+{pM_n\over n}\|\bar f - f\|+{pM_n\over n}\|\hat f - \bar f\|\\
& =& O_p\Big( \, {1\over n} \, \Big) + O_p\Big( \big({\log
K_n\over nK_n}\big)^{1/2} \Big) + O_p\Big({\alpha\over K_n} \Big).
\end{eqnarray*}
A similar rate can be obtained for $|R_{y,k_x+1}-R_{f,k_x+1}|$.
Given the admissible ranges of $K_n$ and $\alpha$ in next Corollary \ref{coro:refp}, $O_p(({\log K_n/ nK_n})^{1/2})$ is the dominating
term. Hence, the lemma follows.
\end{proof}

\begin{theorem}\label{thm:main}
If the true regression function is $2m$th order continuously
differentiable  with bounded $2m$th derivative, then the
$P$-spline estimator $\hat{f}^{[m]}$ can be written as
\begin{eqnarray}\label{equ:rep}
\hat{f}^{[m]}(t) &=& f(t) + (-1)^{m-1}\alpha f^{(2m)}(t) + o(\alpha)+
{1\over n}\sum_{i=1}^n K(t, t_i)\epsilon_i \\
&& ~~~~~~~~~~~~~~~~~~~~~~~~+ O_p\Big( \sqrt{\log K_n \over nK_n}~ \Big) \beta^m +
e^{-\beta t(1-t)}O_p( \beta^m ), \nonumber
\end{eqnarray}
uniformly in $\alpha$ and in $t\in (0,1)$.
\end{theorem}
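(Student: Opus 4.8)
\noindent{\it Proof strategy.}
The plan is to start from the convolution representation of $\hat F_m$ in Lemma~\ref{lem:repF} and differentiate it $m$ times, using that $p=m$ forces $\hat F_m^{(m)}=\hat f^{[m]}$ (iterate $\hat F_1(x)=\int_0^x\hat f^{[m]}(u)\,du$). Writing $\mathcal K[h](t)\equiv\int_0^1 K(s,t)h(s)\,ds$, this gives $\hat f^{[m]}(t)=\frac{d^m}{dt^m}\mathcal K[G_m](t)+\frac{d^m}{dt^m}\mathcal K[\tilde R](t)+J^{(m)}(t)$, so the task splits into three pieces. A preliminary step is to differentiate through the kink of $K(s,t)=L(|t-s|)$ (resp.\ $P(|t-s|)$) at $s=t$: by the parity of the jump conditions (\ref{eqn:L}) (resp.\ (\ref{eqn:P})) one has $L^{(j)}(0)=0$ for every odd $j\le 2m-3$, so for any $h$ and any $k\le 2m-2$ the Leibniz boundary terms cancel and $\frac{d^k}{dt^k}\mathcal K[h](t)=\int_0^1\partial_t^k K(s,t)\,h(s)\,ds$ with $\partial_t^k K(s,t)=(-1)^k\partial_s^k K(s,t)=[\operatorname{sgn}(t-s)]^k\beta^{k+1}\ell^{(k)}(\beta|t-s|)$, $\ell$ denoting the $\beta=1$ kernel; since only $k=m\le 2m-2$ is needed ($m=1$ being handled directly from the explicit exponential kernel, where the Leibniz terms again cancel in pairs), this is always available, and in particular $\int_0^1|\partial_t^m K(s,t)|\,ds=O(\beta^m)$ uniformly in $t$.

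For the bias term, decompose $G_m=\Phi_m+W_m+V_m$, with $W_m=\tilde\Phi_m-\Phi_m$ the deterministic discretization and $V_m=G_m-\tilde\Phi_m$ the pure-noise part ($V_1(x)=n^{-1}\sum_{t_i\le x}\epsilon_i$, $V_k(x)=\int_0^x V_{k-1}\,d\omega_2$). Integrating by parts $m$ times in $s$, using $\Phi_m^{(j)}=\Phi_{m-j}$ and $\Phi_j(0)=0$, gives $\frac{d^m}{dt^m}\mathcal K[\Phi_m](t)=\mathcal K[f](t)+\mathcal B(t)$, the surviving boundary terms sitting at $s=1$ and being $O(\beta^m e^{-\beta\mu_{\min}(1-t)})=e^{-\beta t(1-t)}O(\beta^m)$ (using $\min(t,1-t)\ge t(1-t)$). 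Then expand $\mathcal K[f]$: substitute $u=\beta(s-t)$, extend the integration limits to $(-\infty,\infty)$ at cost $e^{-\beta t(1-t)}O(1)$, Taylor-expand $f$ to order $2m$, annihilate the moments of order $1,\dots,2m-1$ by the $2m$th-order kernel property established in Section~\ref{sec:gr}, and evaluate $\int u^{2m}\ell(|u|)\,du$ by $2m$-fold integration by parts together with $\ell^{(2m)}=-\ell$ for even $m$ (resp.\ $\ell^{(2m)}=\ell$ for odd $m$), which gives $\int_{-\infty}^\infty u^{2m}\ell(|u|)\,du=(-1)^{m-1}(2m)!$ and hence $\mathcal K[f](t)=f(t)+(-1)^{m-1}\alpha f^{(2m)}(t)+o(\alpha)$ uniformly, the remainder controlled by uniform continuity of $f^{(2m)}$.

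The noise term is treated by the same integration by parts in the Stieltjes sense, peeling one $V_\bullet$-level at a time ($dV_k=K_n^{-1}\sum_j V_{k-1}(\kappa_j)\delta_{\kappa_j}$ for $k\ge2$, $dV_1=n^{-1}\sum_i\epsilon_i\delta_{t_i}$); each step replaces an $\omega_2$-Riemann sum by its Lebesgue integral. Writing the quadrature error $\int_0^1 h\,d\omega_2-\int_0^1 h$ as $-\int_0^1 h'(s)\rho(s)\,ds$, with $\rho$ the sawtooth discrepancy ($\|\rho\|=K_n^{-1}$, $\rho(0)=\rho(1)=0$, mean $-\tfrac1{2K_n}$ per period), its leading part is $\tfrac1{2K_n}(h(1)-h(0))$ and the mean-zero remainder is $O(K_n^{-2})$ times a further derivative of $h$; combined with $\|V_k\|=O_p(n^{-1/2})$ and the maximal inequality $\max_j|n^{-1}\sum_{\kappa_{j-1}<t_i\le\kappa_j}\epsilon_i|=O_p(\sqrt{\log K_n/(nK_n)})$, everything collapses to $\frac1n\sum_{i=1}^n K(t,t_i)\epsilon_i+O_p(\sqrt{\log K_n/(nK_n)})\beta^m+e^{-\beta t(1-t)}O_p(\beta^m)$. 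The discretization term $W_m$, written as a sum of iterated integrals of the $\omega_2$-discrepancies $r_j$ ($\|r_j\|=O(K_n^{-1})$, $r_j(0)=0$), is handled identically and is absorbed into $e^{-\beta t(1-t)}O(\beta^m)+o(\alpha)$ over the admissible range of $(\alpha,K_n)$ of Corollary~\ref{coro:refp}. Finally $|\frac{d^m}{dt^m}\mathcal K[\tilde R](t)|\le\|\tilde R\|\int_0^1|\partial_t^m K(s,t)|\,ds=O_p(\sqrt{\log K_n/(nK_n)})\,O(\beta^m)$ by Lemma~\ref{lem:repF}, and by Lemmas~\ref{lem:banded_equation_even}--\ref{lem:banded_equation_odd} the coefficient vectors ${\bf a}$, ${\bf b}$ are $O_p(1)$, so the boundary solution $J$, a combination of $e^{-\beta\mu_k t}(\cdots)$ and $e^{-\beta\mu_k(1-t)}(\cdots)$, satisfies $J^{(m)}(t)=e^{-\beta t(1-t)}O_p(\beta^m)$. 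Adding the four contributions yields (\ref{equ:rep}).

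The hardest part is the noise/discretization bookkeeping of the third paragraph: a crude bound on the $\omega_2$-quadrature errors only gives $O(\beta^m/K_n)$, which is {\it not} dominated by the stated remainders (that would require $M_n=o(\log K_n)$, false for polynomially growing $K_n$), so one must exploit the exact mean-zero oscillation of the discrepancy $\rho$ to reduce these errors to boundary-type terms $e^{-\beta t(1-t)}O(\beta^m)$ plus genuinely higher-order pieces, all the while keeping the $(\alpha,K_n)$-dependence explicit so that the bound is uniform in $\alpha$. Obtaining the bias constant exactly equal to $(-1)^{m-1}$, which hinges on the jump conditions (\ref{eqn:L})--(\ref{eqn:P}) and the sign of $\ell^{(2m)}$, is the secondary delicate point.
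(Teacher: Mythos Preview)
Your strategy coincides with the paper's: take $m$ derivatives of the representation in Lemma~\ref{lem:repF}, split $G_m=\Phi_m+(\tilde\Phi_m-\Phi_m)+(G_m-\tilde\Phi_m)$, integrate by parts in $s$ to reduce the $\Phi_m$ part to $\int_0^1 K(t,s)f(s)\,ds$ plus boundary terms at $s=1$, and bound the $\tilde R$ and $J$ contributions exactly as you describe. Two refinements distinguish your write-up from the paper's argument. First, the paper does not expand $\int_0^1 K(t,s)f(s)\,ds$ itself but simply quotes Nychka (1995, Theorem~2.2, eq.~(6.4)) for the identity $\int_0^1 K(t,s)f(s)\,ds=f(t)+(-1)^{m-1}\alpha f^{(2m)}(t)+o(\alpha)$; your direct moment computation from the jump conditions (\ref{eqn:L})--(\ref{eqn:P}) and the kernel ODE $\ell^{(2m)}=\mp\ell$ is a self-contained substitute that makes the sign $(-1)^{m-1}$ transparent. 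Second, the paper bounds the discretization piece crudely by $\|\tilde\Phi_m-\Phi_m\|\cdot\big|\int_0^1\partial_t^m K(t,s)\,ds\big|$, asserting this is $O(n^{-1})\beta^m$, and peels the noise piece down to $\frac{1}{n}\sum_i K(t,t_i)\epsilon_i$ by an integration-by-parts formula that tacitly replaces $d\omega_2$ by $ds$ at the intermediate levels. For $m\ge 2$ the $\omega_2$-Riemann-sum errors at levels $2,\dots,m$ actually give $\|\tilde\Phi_m-\Phi_m\|=O(K_n^{-1})$, not $O(n^{-1})$, and --- as you correctly diagnose in your final paragraph --- a naive $O(\beta^m/K_n)$ is not dominated by the remainders in~(\ref{equ:rep}) under the admissible $(\alpha,K_n)$ of Corollary~\ref{coro:asy}. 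Your sawtooth-cancellation device, which converts the leading $\tfrac{1}{2K_n}$ part of each quadrature error into a boundary contribution absorbed by $e^{-\beta t(1-t)}O(\beta^m)$ and pushes the rest to higher order in $K_n^{-1}$, is therefore a genuine tightening of the paper's treatment at this step rather than a different approach.
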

\begin{proof} Taking the $m$th derivative of $\int_0^1 K(s, t)G_m(s)ds$, we
obtain
\begin{eqnarray*}
\int_0^1 {\partial^m K(t,s)\over \partial t^m}G_m(s)ds &=&\int_0^1
{\partial^m K(t,s)\over \partial t^m}\Phi_m(s)ds+\int_0^1
{\partial^m K(t,s)\over \partial t^m}\Big[G_m(s)-\tilde \Phi_m(s)\Big]ds\\
&&+\int_0^1 {\partial^m K(t,s)\over \partial t^m} \Big[\tilde
\Phi_m(s)-\Phi_m(s) \Big]ds.
\end{eqnarray*}
It is easy to show that
$$\int_0^1{\partial K(t,s)\over \partial
t}\Phi_m(s)ds=-\int_0^1 {\partial K(t,s)\over \partial
s}\Phi_m(s)ds = -\Phi_m(1)K(t, 1) + \int_0^1
K(t,s)\Phi_{m-1}(s)ds.$$ Therefore,
$$\int_0^1 {\partial^m K(t,s)\over \partial
t^m}\Phi_m(s)ds = -\sum_{j=0}^{m-1} \Phi_{m-j}(1) {\partial^{m-j}
\over \partial t^{(m-j)}}K(t, 1)+ \int_0^1 K(t,s)f(s)ds.$$ By
Equation (6.4) in Theorem 2.2 of Nychka (1995), we have
$$\int_0^1 K(t,s)f(s)ds = f(t) + (-1)^{m-1}\alpha f^{(2m)}(t) + o(\alpha).$$
Similarly, \begin{eqnarray*}\int_0^1 {\partial^m K(t,s)\over
\partial t^m}\Big[G_m(s)-\tilde \Phi_m(s) \Big]ds &=& -\sum_{j=0}^{m-1}
\Big[G_{m-j}(1)-\tilde \Phi_{m-j}(1) \Big] {\partial^{m-j} \over
\partial
t^{(m-j)}}K(t, 1)\\ &&+ \int_0^1 K(t,s)\Big[dG_1(s) - d\tilde \Phi_1(s) \Big],\\
&=& O_p\Big({\beta^m e^{-\beta(1-t)\over \sqrt{n}}}\Big) + {1\over
n}\sum_{i=1}^n K(t, t_i)\epsilon_i.
\end{eqnarray*}
Moreover,
$$\Big| \int_0^1 {\partial^m K(t,s)\over \partial
t^m}\Big[\tilde\Phi_m(s)-\Phi_m(s) \Big]ds \Big| \le
\|\tilde\Phi_m - \Phi_m\|~ ~\Big| \int_0^1 {\partial^m K(t,s)\over
\partial t^m}ds\Big|,$$ which is of order $O(1/n)\beta^m$.
Finally, in light of Lemma \ref{lem:repF}, $\|{d^m\over
dt^m}\int_0^1 K(s, t)\tilde R(s)ds\|$ is of order $(\log
K_n/nK_n)^{1/2} \beta^m$. It is easy to verify that the $m$th
derivative of $J(t)$ is of order $e^{-\beta t(1-t)}\beta^m$. This
completes the detail of the representation.
\end{proof}

%
%
\begin{remark} \rm
Theorem \ref{thm:main} indicates that the $P$-spline estimator is
approximately a kernel regression estimator. The equivalent kernel
is $K(t, s)$ given in Section \ref{sec:gr}, and $\alpha$ plays a
role similar to the bandwidth $h$. The asymptotic mean has the
bias $(-1)^{m-1}\alpha f^{(2m)}(x)$, which can be negligible if
$\alpha$ is reasonably small. On the other hand, $\alpha$ can not
be arbitrarily small as that will inflate the random component.
The admissible range for $\alpha$ given in Corollary
\ref{coro:asy} is a compromise between these two.
\end{remark}

\begin{coro}\label{coro:asy}
Let $\alpha$ satisfy $\alpha n^{2m/(4m+1)}\rightarrow 0$ and
$\alpha^{-(2m-1)/2m} \log K_n/K_n\rightarrow 0$. Suppose also that
the true regression function $f$ is $2m$th order continuously
differentiable with bounded $2m$th derivative. Then for $t\in
(0,1)$,
\begin{equation}\label{equ:asy1}
\sqrt{n\over \beta}~[\hat{f}^{[m]}(t) - f(t)] \rightarrow^d N\big(0,
\sigma_K^2(t)\big),
\end{equation}
where ${1\over \beta}\int_0^1 K^2(t,s)ds\rightarrow \sigma_K^2(t)$ as $n\rightarrow\infty$.
However, if $\alpha = c^{2m} n^{-{2m\over 4m+1}}$ for $c>0$, and let $K_n\sim n^\gamma$ with $\gamma>(2m-1)/(4m+1)$, then
\begin{equation}\label{equ:asy2}
n^{2m/ (4m+1)}~[\hat{f}^{[m]}(t) - f(t)] \rightarrow^d
N\Big((-1)^{m-1}c^{2m} f^{(2m)}(t), ~~{\sigma_K^2(t)\over c}~\Big).
\end{equation}
\end{coro}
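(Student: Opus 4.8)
The plan is to plug the expansion of Theorem~\ref{thm:main} (which applies since $p=m$) into $\sqrt{n/\beta}\,[\hat f^{[m]}(t)-f(t)]$, with $\beta=\alpha^{-1/(2m)}$, to show that the noise term $\frac1n\sum_{i=1}^n K(t,t_i)\epsilon_i$ is the only one that survives in the limit, and to verify that each of the remaining terms is $o_p(1)$ after multiplication by $\sqrt{n/\beta}$. The two hypotheses are tailored for exactly this: $\alpha n^{2m/(4m+1)}\to 0$ kills the bias $(-1)^{m-1}\alpha f^{(2m)}(t)$ together with the $o(\alpha)$ term, and $\alpha^{-(2m-1)/2m}\log K_n/K_n\to 0$ kills the $O_p\big(\sqrt{\log K_n/(nK_n)}\big)\beta^m$ remainder; the boundary term $e^{-\beta t(1-t)}O_p(\beta^m)$ is controlled by the exponential factor, since $\beta\to\infty$ at a polynomial rate in $n$ and $t(1-t)>0$ for fixed $t\in(0,1)$.

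For the noise term, write $K(t,s)=\beta\,K_0(\beta(s-t))$, where $K_0$ is the $\beta=1$ kernel of Section~\ref{sec:gr}, a finite combination of terms $e^{-\mu_k|u|}\cos(\omega_k u)$ and $e^{-\mu_k|u|}\sin(\omega_k u)$ with $\mu_k>0$, so $K_0$ and its derivatives decay exponentially. Since the $\epsilon_i$ are i.i.d.\ $N(0,\sigma^2)$, the quantity $\sqrt{n/\beta}\cdot\frac1n\sum_i K(t,t_i)\epsilon_i$ is, for every $n$, exactly normal with mean $0$ and variance $\frac{\sigma^2}{\beta n}\sum_{i=1}^n K^2(t,t_i)$. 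Because the $t_i$ are equally spaced and $s\mapsto K^2(t,s)$ has total variation $O(\beta^2)$ on $[0,1]$, the Riemann-sum error satisfies $\frac1n\sum_i K^2(t,t_i)=\int_0^1 K^2(t,s)\,ds+O(\beta^2/n)$; the second hypothesis with $K_n\le n$ gives $\beta^{2m-1}=o(K_n/\log K_n)$, so in particular $\beta=o(n)$ and the error is $o(\beta)$. Combined with $\frac1\beta\int_0^1 K^2(t,s)\,ds\to\sigma_K^2(t)$ from the statement, the variance tends to $\sigma^2\sigma_K^2(t)$ (i.e.\ $\sigma_K^2(t)$ under the normalization of the statement), so this term converges in distribution to the centered normal law of (\ref{equ:asy1}).

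For the remaining terms, multiply Theorem~\ref{thm:main} by $\sqrt{n/\beta}$ and use $\beta=\alpha^{-1/(2m)}$. One has $\sqrt{n/\beta}\,\alpha=n^{1/2}\alpha^{(4m+1)/(4m)}=\big(n^{2m/(4m+1)}\alpha\big)^{(4m+1)/(4m)}\to 0$, so the bias term and the $o(\alpha)$ term vanish; $\sqrt{n/\beta}\,\sqrt{\log K_n/(nK_n)}\,\beta^m=\big(\beta^{2m-1}\log K_n/K_n\big)^{1/2}=\big(\alpha^{-(2m-1)/2m}\log K_n/K_n\big)^{1/2}\to 0$ by the second hypothesis; and since $\alpha=o(n^{-2m/(4m+1)})$ implies $\beta/n^{1/(4m+1)}\to\infty$, the factor $e^{-\beta t(1-t)}$ dominates $\sqrt{n/\beta}\,\beta^m=O(\sqrt n\,\beta^{m-1/2})$, so the boundary contribution is $o_p(1)$. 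Slutsky's theorem then gives (\ref{equ:asy1}).

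For the second assertion we take $\alpha=c^{2m}n^{-2m/(4m+1)}$, so $\beta=c^{-1}n^{1/(4m+1)}$ and $n^{2m/(4m+1)}=c^{-1/2}\sqrt{n/\beta}$. Here hypothesis one is violated, so the bias does not vanish: $n^{2m/(4m+1)}(-1)^{m-1}\alpha f^{(2m)}(t)=(-1)^{m-1}c^{2m}f^{(2m)}(t)$ exactly, while $n^{2m/(4m+1)}o(\alpha)=o(1)$. The scaled noise term equals $c^{-1/2}$ times the one in the first part, hence converges in distribution to $N(0,\sigma_K^2(t)/c)$, and the choice $K_n\sim n^\gamma$ with $\gamma>(2m-1)/(4m+1)$ makes $\alpha^{-(2m-1)/2m}\log K_n/K_n=c^{-(2m-1)}\gamma\,n^{(2m-1)/(4m+1)-\gamma}\log n\to 0$, so the $\sqrt{\log K_n/(nK_n)}\beta^m$ remainder and the boundary term again vanish after scaling by $n^{2m/(4m+1)}$. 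Slutsky's theorem yields (\ref{equ:asy2}). The main obstacle throughout is the bookkeeping: checking that the two somewhat implicit rate conditions are exactly what is needed to push every error term of Theorem~\ref{thm:main} below the $\sqrt{\beta/n}$ order of the noise term, together with tracking the $\beta$-dependence of the noise variance through the rescaling $K(t,s)=\beta K_0(\beta(s-t))$ and the Riemann-sum and tail estimates that deliver the limiting variance $\sigma_K^2(t)$.
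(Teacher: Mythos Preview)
Your proposal is correct and follows essentially the same route as the paper's proof: invoke the expansion of Theorem~\ref{thm:main}, show the scaled noise term is asymptotically normal with the stated variance, and check that the two rate hypotheses are exactly what is needed to make the bias, remainder, and boundary terms $o_p(1)$ after scaling by $\sqrt{n/\beta}$ (respectively $n^{2m/(4m+1)}$). The only minor difference is that you exploit the exact normality of $\sum_i K(t,t_i)\epsilon_i$ (available because the $\epsilon_i$ are Gaussian) together with an explicit Riemann-sum estimate for the variance, whereas the paper simply appeals to the Lindeberg--Levy CLT and the definition of $\sigma_K^2(t)$; your version is slightly more self-contained on the variance side but otherwise identical in spirit.
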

\begin{proof} Let $\Pi(t) = {1\over n}\sum_{i=1}^n K(t, t_i)\epsilon_i$. For any fixed $t$, the Lindeberg-Levy central limit theorem gives
$$\sqrt{n\over \beta} ~\Pi(t)\rightarrow N(0, \sigma_K^2(t))$$
in distribution, where ${1\over \beta}\int_0^1 K^2(t,s)ds\rightarrow \sigma_K^2(t)$ as $n\rightarrow\infty$. If $\alpha$ satisfies $\alpha n^{2m/(4m+1)}\rightarrow 0$ and
$\alpha^{-(2m-1)/2m} \log K_n/K_n\rightarrow 0$, it is easy to see that the remainder terms in (\ref{equ:rep}) are $o_p(1)$. If $\alpha = c^{2m} n^{-{2m\over 4m+1}}$ for $c>0$, and $K_n\sim n^\gamma$ with $\gamma>(2m-1)/(4m+1)$, we have $\sqrt{n/\beta}\alpha = c^{2m+1/2}$ and $\sqrt{n/\beta} \sqrt{\log K_n \over nK_n} \beta^m\rightarrow 0$. The theorem follows.
\end{proof}

%
%
\begin{remark} \rm
The asymptotic results in Corollary \ref{coro:asy} provide
theoretical justification of the observation that the number of
knots is not important, as long as it is above some minimal level
(Ruppert, 2002). It is easy to find that the mean squared error of
the $P$-spline estimator is of order $n^{-4m/4m+1}$, which
achieves the optimal rate of convergence given in Stone (1982).
\end{remark}

In the following, we study the asymptotic property of
$\hat{f}^{[p]}(t) = \sum_{k=1}^{K_n+p}\hat{b}_k B_k^{[p]}(t)$ when
$p\neq m$. We first define a piecewise $m$th degree polynomial
$\tilde{f}^{[m]}$, where $\hat{f}^{[p]}$ and $\tilde{f}^{[m]}$
share the same set of spline coefficients. In particular, define
$\tilde{f}^{[m]}(t) = \sum_{k=1}^{K_n+m}\hat b_kB^{[m]}_k(t)$ if
$p>m$, or $\tilde{f}^{[m]}(t) = \sum_{k=1}^{K_n+p}\hat{b}_k
B^{[m]}_k(t)$ if $p< m$. Note that, if $p<m$, $\tilde{f}^{[m]}$ is
defined on $[0, 1-{m-p\over K_n}]$. Following the similar
discussion as above, we can establish the asymptotic distribution
for $\tilde{f}^{[m]}$ as in (\ref{equ:asy1}) and (\ref{equ:asy2}),
respectively, under different admissible ranges of $\alpha$ and
$K_n$.

\begin{lemma} \label{lem:diff}
For any $t\in (0,1)$, let $d= \lfloor K_n t \rfloor+1$. Let $\hat \gamma(t) =\hat{f}^{[p]}(t) - \tilde{f}^{[m]}(t)$. Then, if
$p>m$,
\begin{equation}\label{equ:rep0}
\hat \gamma(t)=
\sum_{q=m+1}^p\sum_{i=d+1}^{d+q} \Big({K_n\over
q}(t-\kappa_{i-q}) \Big)B_i^{[q-1]}(t) \sum_{l=1}^p a_{i+1-d, l}
K_n^{-l}{d^l\over dt^l} \hat f^{[p]}(t),
\end{equation}
and if $p<m$,
\begin{equation}\label{equ:rep1}
\hat \gamma(t) = -
\sum_{q=p+1}^m\sum_{i=d+1}^{d+m} \Big({K_n\over
q}(t-\kappa_{i-q}) \Big)B_i^{[q-1]}(t) \sum_{l=1}^m b_{i+1-d, l}
K_n^{-l}{d^l\over dt^l} \tilde f^{[m]}(t),
\end{equation}
where the coefficients $\{a_{ij}\}$ and $\{b_{ij}\}$ are constants.
\end{lemma}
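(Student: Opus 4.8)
\emph{Proof idea.} Since $\hat f^{[p]}$ and $\tilde f^{[m]}$ are assembled from the \emph{same} coefficient sequence $\{\hat b_k\}$ but with B-splines of different degree, the natural device is to change the degree one unit at a time. For an integer $q$ between $\min(p,m)$ and $\max(p,m)$ write $f^{[q]}(t)=\sum_k\hat b_k B_k^{[q]}(t)$, so that $f^{[p]}=\hat f^{[p]}$ and $f^{[m]}=\tilde f^{[m]}$ (the surplus coefficients appearing in the definition of $\tilde f^{[m]}$, if any, attach to B-splines that vanish at an interior point and are immaterial here). Telescoping gives
$$\hat\gamma(t)=\sum_{q=m+1}^{p}\bigl(f^{[q]}(t)-f^{[q-1]}(t)\bigr)\quad(p>m),\qquad \hat\gamma(t)=-\sum_{q=p+1}^{m}\bigl(f^{[q]}(t)-f^{[q-1]}(t)\bigr)\quad(p<m),$$
so it suffices to analyse a single increment $f^{[q]}-f^{[q-1]}$ and then sum over $q$.

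For that increment I would use the de Boor recursion, which on the equally spaced knots (spacing $1/K_n$) reads $B_k^{[q]}(t)=\frac{K_n}{q}\bigl[(t-\kappa_{k-q-1})B_{k-1}^{[q-1]}(t)+(\kappa_k-t)B_k^{[q-1]}(t)\bigr]$. Substituting this into $f^{[q]}$ and shifting the index in the first sum, the coefficient of $B_k^{[q-1]}(t)$ in $f^{[q]}(t)$ is $\frac{K_n}{q}\bigl[\hat b_{k+1}(t-\kappa_{k-q})+\hat b_k(\kappa_k-t)\bigr]$, while in $f^{[q-1]}(t)$ it is $\hat b_k=\frac{K_n}{q}\hat b_k(\kappa_k-\kappa_{k-q})$; subtracting and using $(\kappa_k-t)-(\kappa_k-\kappa_{k-q})=-(t-\kappa_{k-q})$ one gets
$$f^{[q]}(t)-f^{[q-1]}(t)=\frac{K_n}{q}\sum_k(\Delta\hat b_{k+1})\,(t-\kappa_{k-q})\,B_k^{[q-1]}(t).$$
For a fixed $t\in(\kappa_{d-1},\kappa_d]$ only a window of $q$ consecutive indices $k$ contributes, by the supports of the degree-$(q-1)$ B-splines, and relabelling turns this into the sum over $i=d+1,\dots,d+q$ appearing in (\ref{equ:rep0})--(\ref{equ:rep1}).

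The last step is to rewrite the finite differences $\Delta\hat b_{k+1}$ occurring in that window through the derivatives of the estimator. When $p>m$, formula (\ref{equ:derivative}) with $l=1$ shows that $\{K_n\Delta\hat b_k\}$ is exactly the B-spline coefficient sequence of the degree-$(p-1)$ spline $(\hat f^{[p]})'$; on the span $(\kappa_{d-1},\kappa_d]$ this spline is a polynomial of degree $p-1$, so it is uniquely determined by the $p$ numbers $\frac{d^l}{dt^l}\hat f^{[p]}(t)$, $l=1,\dots,p$, evaluated at any point $t$ of the span. Inverting the nonsingular local correspondence between the $p$ active B-spline coefficients of $(\hat f^{[p]})'$ and these $p$ derivatives — a linear map whose entries, by translation invariance of the equally spaced knots, depend only on $p$ and on the position of $t$ within its span, and not on $n$, on $d$, or on the data — produces deterministic coefficients $a_{i+1-d,l}$ with $\Delta\hat b_{k+1}=\sum_{l=1}^{p}a_{i+1-d,l}K_n^{-l}\frac{d^l}{dt^l}\hat f^{[p]}(t)$. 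Since every $q$ in the telescoped sum satisfies $q\le p$, all the first differences that occur are among these active ones, and substituting gives (\ref{equ:rep0}). The case $p<m$ is word-for-word the same with $\hat f^{[p]}$ replaced by $\tilde f^{[m]}$ (whose coefficients are again $\hat b_k$) and $p$ replaced by $m$, yielding the constants $b_{i+1-d,l}$ and (\ref{equ:rep1}); the fact that $\tilde f^{[m]}$ is then only defined on $[0,1-(m-p)/K_n]$ merely restricts the admissible $d$.

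The underlying manipulations are routine B-spline algebra; the part that needs care is the index bookkeeping — pinning down precisely which window of coefficients and which window of B-splines is active on a given knot span, and checking that these windows line up consistently for every $q$ between $\min(p,m)+1$ and $\max(p,m)$ — together with the verification that the local change of basis between the active first differences and the relevant derivatives is invertible with inverse bounded uniformly in $n$. This last point is the crux: it holds because, restricted to a single span, $(\hat f^{[p]})'$ (respectively $(\tilde f^{[m]})'$) is a genuine polynomial of degree $p-1$ (respectively $m-1$), and equal spacing reduces the nonsingularity check to one reference span.
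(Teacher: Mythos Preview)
Your approach is essentially the same as the paper's: telescope $\hat\gamma$ over one-step degree changes, compute each increment $f^{[q]}-f^{[q-1]}$ via the de Boor recursion to get the $\Delta\hat b_{i+1}\,\frac{K_n}{q}(t-\kappa_{i-q})B_i^{[q-1]}(t)$ form, and then convert the active first differences into derivatives of the higher-degree spline by inverting the local linear map (the paper calls this matrix $C_d$). Your polynomial argument for invertibility is a clean alternative to the paper's implicit use of (\ref{equ:diff0})--(\ref{equ:derivative}), and your observation that the resulting coefficients depend on the position of $t$ within its span (rather than being literal constants) is in fact more precise than the paper's phrasing.
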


\begin{proof}
The B-spline basis functions have the recurrence relationship such
that $$B_j^{[p]}(t) = {K_n\over
p}(t-\kappa_{j-p-1})B_{j-1}^{[p-1]}(t) + {K_n\over
p}(\kappa_{j}-t)B_{j}^{[p-1]}(t).$$ Let $f^{[p-1]}(t) =
\sum_{k=1}^{K_n+p-1}b_k B_k^{[p-1]}(t)$ with the same first
$(K_n+p-1)$ coefficients of $f^{[p]}$. For $x\in (\kappa_{d},
\kappa_{d+1})$, the difference between $f^{[p]}(t)$ and
$f^{[p-1]}(t)$ is given by
\begin{eqnarray}\label{equ:minus}
{f}^{[p]}(t) -{f}^{[p-1]}(t) &=& \sum_{i=d+1}^{d+p} \Big[
b_{i+1}{K_n\over p}(t-\kappa_{i-p}) +
b_i \big({K_n\over p}(\kappa_{i}-t)-1 \big) \Big] B_i^{[p-1]}(t) \nonumber \\
&=&\sum_{i=d+1}^{d+p} (b_{i+1}-b_i) \Big({K_n\over
p}(t-\kappa_{i-p}) \Big)B_i^{[p-1]}(t).
\end{eqnarray} From
(\ref{equ:minus}), if $p>m$,
\[
\hat{f}^{[p]}(t) = \tilde{f}^{[m]}(t) +
\sum_{q=m+1}^p\sum_{i=d+1}^{d+q} \Delta b_{i+1}\Big({K_n\over
q}(t-\kappa_{i-q}) \Big)B_i^{[q-1]}(t).
\]
From (\ref{equ:diff0}), we have $\Delta^l b_k = c_l^T(\Delta
b_{k-l+1}, \Delta b_{k-l+2}, \ldots, \Delta b_{k} )$, where $$c_l
= \Big[~(-1)^{l-1}{l-1\choose 0}, (-1)^{l-2}{l-1\choose 1},
\ldots, (-1)^{0}{l-1\choose l-1}~\Big]^T.$$ Combining this with
(\ref{equ:derivative}), it is easy to show that there exists
$C_d\in \mathbb{R}^{p\times p}$ such that
$$\Big[~\Delta b_{d+2}, \Delta b_{d+2}, \ldots, \Delta
b_{d+p+1}~\Big]^T = C_d ~\Big[~K_n^{-1}{d\over dt} f^{[p]}(t),
\ldots, K_n^{-p}{d^p\over dt^p} f^{[p]}(t)~\Big]^T.$$ Hence, we
can write \begin{equation}\Delta b_{d+k} = \sum_{l=1}^p a_{kl}
K_n^{-l}{d^l\over dx^l} f^{[p]}(t), \ k=2, \ldots,
p+1,\end{equation} which gives (\ref{equ:rep0}). (\ref{equ:rep1})
can be established similarly. Thus the lemma follows.
\end{proof}

\begin{coro}\label{coro:refp}
Suppose that $f$ is $2m$th order continuously differentiable with
bounded $2m$th derivative on $[0, 1]$. Let $\alpha$ satisfy
$\alpha n^{2m/(4m+1)}\rightarrow 0$ and $\alpha^{-(2m-1)/2m} \log
K_n/K_n\rightarrow 0$. Then, for $t\in (0,1)$,
\begin{equation}\label{equ:asy3}
\sqrt{n\over \beta}~[\hat{f}^{[p]}(t) - f(t) - \hat\gamma(t)] \rightarrow^d N\big(0,
\sigma_K^2(t)\big),
\end{equation}
where $\gamma(t)$ is given by (\ref{equ:rep0}) if $p<m$ or (\ref{equ:rep1}) if $p>m$.
However, if $\alpha = c^{2m} n^{-{2m\over 4m+1}}$ for $c>0$, and let $K_n\sim n^\gamma$ with $\gamma>(2m-1)/(4m+1)$, then
\begin{equation}\label{equ:asy4}
n^{2m/ (4m+1)}~[\hat{f}^{[p]}(t) - f(t)-\hat \gamma(t)] \rightarrow^d
N\Big((-1)^{m-1}c^{2m} f^{(2m)}(t), ~~{\sigma_K^2(t)\over c}~\Big).
\end{equation}
\end{coro}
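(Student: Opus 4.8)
The plan is to reduce the assertion to the degree-$m$ result of Corollary~\ref{coro:asy} by exploiting the identity built into Lemma~\ref{lem:diff}: by construction of $\hat\gamma$ there, $\hat f^{[p]}(t)-\hat\gamma(t)=\tilde f^{[m]}(t)$ \emph{exactly}, so (\ref{equ:asy3})--(\ref{equ:asy4}) are nothing but the statements that the auxiliary degree-$m$ spline $\tilde f^{[m]}$ obeys the limit laws (\ref{equ:asy1})--(\ref{equ:asy2}) (if $p<m$, on $[0,1-(m-p)/K_n]$, which contains any fixed $t\in(0,1)$ once $K_n$ is large). In view of the proof of Corollary~\ref{coro:asy}, this in turn amounts to showing that $\tilde f^{[m]}$ admits the representation (\ref{equ:rep}) of Theorem~\ref{thm:main}: once that is in hand, the two conclusions follow by applying the Lindeberg--L\'evy theorem to $\frac1n\sum_iK(t,t_i)\epsilon_i$ exactly as there.

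To obtain the representation for $\tilde f^{[m]}$ I would re-run Section~\ref{sec:charact_estimator} together with the Green's-function analysis of Sections~\ref{sec:gr}--\ref{sec:asy}. The optimality condition (\ref{equ:p2}), hence the difference equation (\ref{equ:diff}) with $k=m$, holds for \emph{any} B-spline degree $p$; only the shifted knot arguments $\kappa_{j+p-1}$ change, with $p$ a fixed integer. The identity (\ref{equ:diff2}) ties $\Delta^m\hat b_{m+k}$ to $\frac{d^m}{dx^m}\tilde f^{[m]}$ regardless of $p$. Writing $\hat F_m$ for the $m$-fold Lebesgue integral of $\hat f^{[p]}$ as in Section~\ref{sec:charact_estimator} and substituting $\frac{d^m}{dx^m}\tilde f^{[m]}=\hat F_m^{(2m)}-\frac{d^m}{dx^m}\hat\gamma$ into (\ref{equ:diff}), one finds that $\hat F_m$ solves the ODE (\ref{equ:ode}) with the same remainder $\tilde R$ and one extra forcing term $(-1)^m\alpha\,\frac{d^m}{dx^m}\hat\gamma(x)$. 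The proof of Lemma~\ref{lem:repF} is already written for general $p$ (the $\kappa_{k_x+p}$ and the factor $pM_n$ occur there explicitly), so $\|\tilde R\|=O_p\big((\log K_n/(nK_n))^{1/2}\big)$ and the representation (\ref{equ:repF}) persist. Differentiating $m$ times as in the proof of Theorem~\ref{thm:main}, the contributions of $G_m$, $\tilde R$ and $J$ reconstitute the right-hand side of (\ref{equ:rep}), while the extra forcing contributes $(-1)^m\alpha\int_0^1\partial_t^m K(s,t)\,\hat\gamma^{(m)}(s)\,ds$; after $m$ integrations by parts in $s$ and the Green's identity $(-1)^m\alpha\,\partial_s^{2m}K(s,t)=\delta(s-t)-K(s,t)$, this equals $\hat\gamma(t)-\int_0^1 K(s,t)\hat\gamma(s)\,ds$ up to boundary terms of order $e^{-\beta t(1-t)}$. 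Combining with the exact identity $\hat f^{[p]}(t)=\tilde f^{[m]}(t)+\hat\gamma(t)$ then gives $\tilde f^{[m]}(t)=\big[\text{right side of (\ref{equ:rep})}\big]-\int_0^1 K(s,t)\hat\gamma(s)\,ds+e^{-\beta t(1-t)}O_p(1)$, so $\tilde f^{[m]}$ satisfies (\ref{equ:rep}) as soon as $\int_0^1 K(s,t)\hat\gamma(s)\,ds$ is of smaller order than the remainders displayed there.

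Granting the representation of $\tilde f^{[m]}$, (\ref{equ:asy3})--(\ref{equ:asy4}) follow exactly as in Corollary~\ref{coro:asy}: when $\alpha n^{2m/(4m+1)}\to0$ and $\alpha^{-(2m-1)/2m}\log K_n/K_n\to0$, every remainder in (\ref{equ:rep}) becomes $o_p(1)$ after multiplication by $\sqrt{n/\beta}$, and Lindeberg--L\'evy applied to $\frac1n\sum_iK(t,t_i)\epsilon_i$ yields the centered normal limit; when $\alpha=c^{2m}n^{-2m/(4m+1)}$ and $K_n\sim n^\gamma$ with $\gamma>(2m-1)/(4m+1)$, one has $\sqrt{n/\beta}\,\alpha=c^{2m+1/2}$ and $\sqrt{n/\beta}\,\sqrt{\log K_n/(nK_n)}\,\beta^m\to0$, giving the biased normal limit; subtracting $\hat\gamma(t)$ produces exactly the left-hand sides of (\ref{equ:asy3})--(\ref{equ:asy4}). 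The main obstacle is the penultimate step of the middle paragraph: showing that the convolution $\int_0^1 K(s,t)\hat\gamma(s)\,ds$ (equivalently, the net effect of the degree-$p$/degree-$m$ mismatch $\hat\gamma$ in the difference equation) is negligible at scale $n^{2m/(4m+1)}$. This is where the explicit form of $\hat\gamma$ from Lemma~\ref{lem:diff} — a fixed finite combination of $K_n^{-l}\frac{d^l}{dt^l}\hat f^{[p]}(t)$, piecewise polynomial of bounded degree and small at interior points — together with the $2m$th-order (hence high) oscillation of the kernel $K$ and the admissible ranges of $\alpha$ and $K_n$, must be combined carefully; the remaining items (the banded reduction of (\ref{equ:p3}) under the degree-$p$ basis, the harmless constant-length shifts $\kappa_{j+p-1}$) are routine once Lemma~\ref{lem:repF} is available.
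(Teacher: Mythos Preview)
Your opening paragraph is exactly the paper's argument: the paper gives no separate proof of the corollary, but simply remarks (just before Lemma~\ref{lem:diff}) that ``following the similar discussion as above, we can establish the asymptotic distribution for $\tilde f^{[m]}$ as in (\ref{equ:asy1}) and (\ref{equ:asy2})'', and then the identity $\hat f^{[p]}(t)-\hat\gamma(t)=\tilde f^{[m]}(t)$ from Lemma~\ref{lem:diff} turns (\ref{equ:asy1})--(\ref{equ:asy2}) for $\tilde f^{[m]}$ into (\ref{equ:asy3})--(\ref{equ:asy4}). So at the level of the paper, you are done after your first paragraph.

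Your middle paragraph attempts to supply the details the paper omits, and here your route is more circuitous than necessary and has one genuine wrinkle. You take $\hat F_m$ to be the $m$-fold integral of $\hat f^{[p]}$ and then introduce the extra forcing $(-1)^m\alpha\,\hat\gamma^{(m)}$; but when $p<m$ the spline $\hat f^{[p]}$ has only $p-1$ continuous derivatives, so $\hat F_m^{(2m)}=(\hat f^{[p]})^{(m)}$ and $\hat\gamma^{(m)}$ are not classical objects and your Green's-identity manipulation becomes delicate. The cleaner line---and presumably what the paper means by ``similar discussion''---is to set $\tilde F_m$ equal to the $m$-fold Lebesgue integral of $\tilde f^{[m]}$ itself. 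Then (\ref{equ:diff2}) gives $\tilde F_m^{(2m)}=K_n^m\Delta^m\hat b$ exactly, and the only change from the $p=m$ derivation is that the discrete antiderivative $\check F_m$ in (\ref{equ:diff}) is built from $\hat f^{[p]}(x_i)$ rather than $\tilde f^{[m]}(x_i)$; the discrepancy is a discrete integral of $\hat\gamma$ and is simply absorbed into the remainder $\tilde R$. After $m$ differentiations as in the proof of Theorem~\ref{thm:main}, this extra piece reduces (via the same integration-by-parts trick used there for $\Phi_m$) to $\int_0^1 K(t,s)\hat\gamma(s)\,ds$ plus boundary terms---exactly the quantity you isolate, but reached without ever writing $\hat\gamma^{(m)}$. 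Note also that Lemma~\ref{lem:repF} is \emph{not} written for general $p$ as you claim: its proof invokes $\|\bar f-f\|=O(\alpha)$ ``if $p=m$'', so that step too needs the obvious modification.

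You are right that controlling $\int_0^1 K(t,s)\hat\gamma(s)\,ds$ is the crux. Be aware that the crude bound $\big|\int K\hat\gamma\big|\le\|\hat\gamma\|_\infty$ together with $\hat\gamma=O_p(1/K_n)$ would, after scaling by $\sqrt{n/\beta}\sim n^{2m/(4m+1)}$, require $\gamma>2m/(4m+1)$, which is slightly stronger than the stated $\gamma>(2m-1)/(4m+1)$; the paper itself hedges here (``under different admissible ranges of $\alpha$ and $K_n$''), so either the sharper bound you allude to via the kernel's $2m$th-order property is needed, or the range in the corollary should be read as inherited from whatever range makes the $\tilde f^{[m]}$ analogue go through.
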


\begin{remark} \rm When $p$ is not equal to $m$,
the asymptotic bias has an additional term $\hat \gamma(t)$, which
is of order $O_p(1/K_n)$. When $K_n$ grows sufficiently fast with
respect to $n$, this term is asymptotically negligible.
\end{remark}

%
\section{The equivalent kernels near boundary}
\label{sec:boundary}

The approximation of the equivalent kernel $K(t, s)$ deteriorates
when $t$ is near the boundary points of the design set. In this
section, we derive an explicit formula for the equivalent kernel
when $t$ is close to the boundary. We discuss the case when $t$ is
close to $0$ only; the case when $t$ is close to $1$ follows from
the similar argument and thus is omitted.

%

Consider an even $m$ first. It follows from the closed-form
expressions (\ref{eqn:F_t}) and (\ref{eqn:J_t1}) for $\hat F_m(t)$
that for $t \in[0, 1]$ sufficiently small, the $m$-th derivative
of $\sum^{\frac{m}{2}-1}_{k=0} e^{-\beta\mu_k(1- t)} \big[ a^+_k
\cos(\beta\omega_k t) +  b^+_k \sin(\beta\omega_k
 t) \big]$ is of order $O_p( \beta^m e^{-\beta})$. Hence, we only
consider
\begin{equation}\label{eqn:Ft}
  \wt F(t) \equiv  F_0(t) + \sum^{\frac{m}{2}-1}_{k=0}  e^{-\beta\mu_k t}\big[ a_k
    \cos(\beta\omega_k t) + b_k \sin(\beta\omega_k t) \big].\end{equation}
In the subsequent, we shall express the coefficients $a_k, b_k$ in
terms of $F_0(0)$ and its derivatives. This will eventually lead
to an explicit expression for the kernel.

In view of (\ref{eqn:L_even}), we have
\begin{equation} \label{eqn:F_0_d}
    F^{(j)}_0(0) = \int^1_{0} \frac{\partial L^{(j)}(|s-t|)}{\partial \, t^j} \Big|_{t=0} G(s)
     ds = (-1)^j \int^1_{0} \frac{\partial L^{(j)}(s)}{\partial \, s^j} G(s)
     ds, \ \ \ \forall \ j= 1, \cdots, 2m-1.
\end{equation}
Moreover, it follows from Section~\ref{sect:kernel_even} that
$\displaystyle L(s) = \beta \sum^{\frac{m}{2}-1}_{k=0} p_k(s) =
\beta
\sum^{\frac{m}{2}-1}_{k=0} \begin{bmatrix} \, 1 &  0 \end{bmatrix} \begin{bmatrix} p_k(s) \\
q_k(s) \end{bmatrix}$, where
\[
   \begin{bmatrix} p_k(s) \\ q_k(s) \end{bmatrix} = e^{-\beta \mu_k s} S(\omega_k \beta
  s ) \begin{bmatrix} c_k \\ d_k \end{bmatrix},
\]
and $S( \cdot) \in \mbox{SO}(2)$ is given by
$
  S(\cdot ) \, = \, \begin{bmatrix} \cos(\cdot) & \sin(\cdot) \\
      -\sin(\cdot) & \cos(\cdot) \end{bmatrix}.
$
In light of (\ref{eqn:p_q}), we have
\[
   \begin{bmatrix} p^{(j)}_k(s) \\
    q^{(j)}_k(s) \end{bmatrix} = (\beta A_k)^j \begin{bmatrix}
    p_k(s) \\ q_k(s) \end{bmatrix} = (\beta A_k)^j e^{-\beta \mu_k s} S(\omega_k \beta
  s ) \begin{bmatrix} c_k \\ d_k \end{bmatrix}.
\]
As a result, we obtain
\[
  \frac{\partial L^{(j)}(s)}{\partial \, s^j} \, = \,  \beta
\sum^{\frac{m}{2}-1}_{k=0} \begin{bmatrix} \, 1 &  0 \end{bmatrix} \begin{bmatrix} p^{(j)}_k(s) \\
q^{(j)}_k(s) \end{bmatrix} \, = \, \beta^{j+1}
  \sum^{\frac{m}{2}-1}_{k=0} (A_k)^j_{1 \bullet} e^{-\beta \mu_k s} S(\omega_k \beta
  s ) \begin{bmatrix} c_k \\ d_k \end{bmatrix},
\]
where $(A_k)^j_{1\bullet}$ denotes the first row of the $j$-th
power of $A_k$ given in (\ref{eqn:p_q}). This, along with
(\ref{eqn:F_0_d}), yields
\begin{eqnarray*}
   \frac{ F^{(j)}_0(0) }{\beta^j} & = & (-1)^j \int^1_{0} \Big( \sum^{\frac{m}{2}-1}_{k=0}
   (A_k)^j_{1 \bullet} \beta e^{-\beta \mu_k s} S(\omega_k \beta
  s ) \begin{bmatrix} c_k \\ d_k \end{bmatrix} \Big) G(s) ds \\
  & = &
 \sum^{\frac{m}{2}-1}_{k=0}  (-A_k)^j_{1 \bullet}  \int^1_{0} \Big(  \beta e^{-\beta \mu_k s} S(\omega_k \beta
  s ) \begin{bmatrix} c_k \\ d_k \end{bmatrix} \Big) G(s) ds.
\end{eqnarray*}
For notational simplicity, let $\wh B^e_{11}$ denote the inverse
of the matrix $B^e_{11}$ defined in (\ref{eqn:B_11}) and let
$${\bf p}_j = \sum^{\frac{m}{2}-1}_{k=0} (-A_k)^j_{1 \bullet} \int^1_{0}
    \Big(  \beta e^{-\beta \mu_k s} S(\omega_k \beta
  s ) \begin{bmatrix} c_k \\ d_k \end{bmatrix} \Big) G(s) ds,~~ j=0, \cdots, m-1,$$
and ${\bf p} = [{\bf p}_0, {\bf p}_1, \cdots, {\bf p}_{m-1}]^T\in \, \mathbb R^m$.
Therefore, it follows from the development in
Section~\ref{sect:boundary_conditions} that
\begin{equation} \label{eqn:a_and_b}
   \begin{bmatrix} a_0 \\ b_0 \\ \vdots \\ a_{\frac{m}{2}-1} \\
   b_{\frac{m}{2}-1} \end{bmatrix} \, = \, -(\wh B^e_{11} +
   O_p(e^{-\beta}) ) \, {\bf p} \, = \, -\wh B^e_{11} \, {\bf p} +
   O_p(e^{-\beta}).
\end{equation}
Returning to $\wt F^{(m)}(t)$ and using (\ref{eqn:a_and_b}), we
have, for $\beta\rightarrow \infty$,
\begin{eqnarray}
  \wt F^{(m)} (t) & = & F^{(m)}_0 (t) +  \beta^m
   \sum^{\frac{m}{2}-1}_{\ell=0} (A_\ell)^m_{1 \bullet} e^{-\beta \mu_\ell t} S(\omega_\ell \beta
   \, t ) \begin{bmatrix} a_\ell \\ b_\ell \end{bmatrix} \nonumber \\
   & = & \int^1_0  \frac{\partial L^{(m)}(|t-s|)}{\partial \, s^m} G(s)
     ds \, + \, \beta^m \, {\bf q}^T (t) \Big(-\wh B^e_{11} \Big) {\bf p}, \label{eqn:F_md_kernel_even}
\end{eqnarray}
where ${\bf q}(t) \equiv \big[{\bf q}_0(t), {\bf q}_1(t), \cdots,
{\bf q}_{\frac{m}{2}-1}(t) \big]^T \in \mathbb R^m$ with ${\bf
q}_\ell (t) \equiv (A_\ell)^m_{1 \bullet} e^{-\beta \mu_\ell t}
S(\omega_\ell \beta t )  \in \mathbb R^{1\times 2}$ for $\ell=0,
1, \cdots, \frac{m}{2}-1$.

To find the kernel in this case, particularly the kernel for the
second term, recall
\[
   \begin{bmatrix} p_k(s) \\ q_k(s) \end{bmatrix} = e^{-\beta \mu_k s} S(\omega_k \beta
  s ) \begin{bmatrix} c_k \\ d_k \end{bmatrix}.
\]
Therefore, the second term in (\ref{eqn:F_md_kernel_even}) becomes
$$\beta^m \, {\bf q}^T (t) \Big(-\wh B^e_{11} \Big) {\bf p} \, = \, \int_0^1 W(t,s) G(s)ds,$$
where
$$W(t,s) = \beta^m
   {\bf q}^T(t) \,
   \Big(-\wh  B^e_{11} \Big)\begin{bmatrix} \nu_0(s)\\ \nu_1(s) \\ \vdots \\ \nu_{m-1}(s)
  \end{bmatrix}$$ and $\nu_j(s) = \displaystyle \sum^{\frac{m}{2}-1}_{k=0}
   (-A_k)^j_{1 \bullet}
    \Big(  \beta \begin{bmatrix} p_k(s) \\ q_k(s) \end{bmatrix} \Big)$, $j=0, \ldots, m-1$.

Denote by $\ol p^{(r)}_k(s)$ and $\ol q^{(r)}_k(s)$ the $r$-th
order integrals of $p_k(s)$ and $q_k(s)$ respectively, namely,
\[
   \ol p^{(r)}_k(s)  \equiv \underbrace{\int \cdots
   \int}_{r-\mbox{copies}} p_k \, d\tau_1 \cdots d\tau_r,  \ \ \ \
   \ol q^{(r)}_k(s)  \equiv \underbrace{\int \cdots
   \int}_{r-\mbox{copies}} q_k \, d\tau_1 \cdots d\tau_r.
\]
In light of (\ref{eqn:p_q}), it is easy to verify that
\[
   \begin{bmatrix} \ol p^{(r)}_k(s) \\ \ol q^{(r)}_k(s) \end{bmatrix} = (\beta A_k)^{-r}
    \begin{bmatrix} p_k(s) \\ q_k(s) \end{bmatrix} = e^{-\beta \mu_k s} (\beta
    A_k)^{-r}  S(\omega_k \beta
  s ) \begin{bmatrix} c_k \\ d_k \end{bmatrix}.
\]
Using this and $\displaystyle \frac{\partial^m \ol
p^{(m)}_k(s)}{\partial s^m} = p_k(s), \frac{\partial^m \ol
q^{(m)}_k(s)}{\partial s^m} = q_k(s)$, we have
\begin{eqnarray*}
\nu_j(s) &=& \displaystyle \sum^{\frac{m}{2}-1}_{k=0}
   (-A_k)^j_{1 \bullet}
    \Big(  \beta \begin{bmatrix} \frac{\partial^m \ol
p^{(m)}_k(s)}{\partial s^m} \\ \frac{\partial^m \ol
q^{(m)}_k(s)}{\partial s^m} \end{bmatrix} \Big)= {\partial^m\over \partial s^m}  \displaystyle \sum^{\frac{m}{2}-1}_{k=0}
   (-A_k)^j_{1 \bullet}
    \Big(  \beta \begin{bmatrix} \ol p^{(m)}_k(s) \\  \ol q^{(m)}_k(s)
   \end{bmatrix} \Big)\\
 &=& {\partial^m\over \partial s^m}  \displaystyle \sum^{\frac{m}{2}-1}_{k=0}
   (-A_k)^j_{1 \bullet}
    \Big(  \beta  e^{-\beta \mu_k s} (\beta
    A_k)^{-m}  S(\omega_k \beta
  s ) \begin{bmatrix} c_k \\ d_k \end{bmatrix}   \Big).
\end{eqnarray*}
Therefore,
\begin{equation}
W(t,s) \, \equiv \,  {\partial^m\over \partial s^m} \Big( {\bf
q}^T(t) \,
   \big(-\wh  B^e_{11} \big)~ {\bf r}(s) \Big),
\end{equation}
where ${\bf r}(s) = [{\bf r}_0(s), {\bf r}_1(s), \ldots, {\bf
r}_{m-1}(s)]^T \in \mathbb R^m$, and
$${\bf r}_j(s) = \sum^{\frac{m}{2}-1}_{k=0}  (-A_k)^{-(m-j)}_{1 \bullet}
    \beta e^{-\beta \mu_k s} S(\omega_k \beta
  s ) \begin{bmatrix} c_k \\ d_k \end{bmatrix}, ~~~~j=0, \ldots, m-1.$$
Here the coefficients $c_k, d_k$ satisfy the linear equation
(\ref{eqn:coefficient_even}). Finally, we obtain the equivalent
kernel for $t \ge 0$ sufficiently small (when $\beta \rightarrow
\infty$) as
\begin{equation}
K_b(t,s) =  L(|t-s|) + {\bf q}^T(t) \,
   \big(-\wh  B^e_{11} \big)~ {\bf r}(s).
\end{equation}

\begin{figure}
\begin{center}
\resizebox{6in}{2in}{\includegraphics{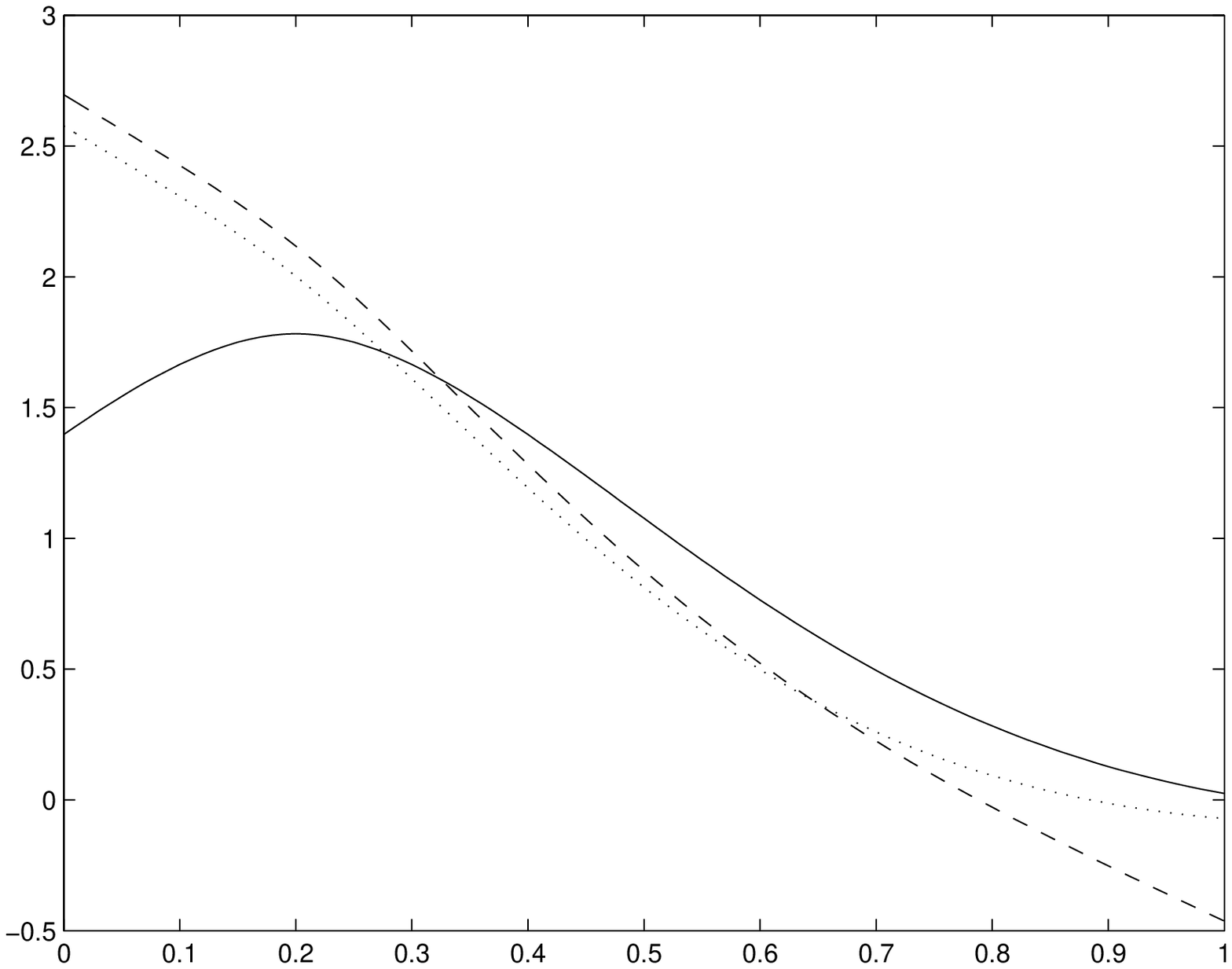}\hspace{0.2in}\includegraphics{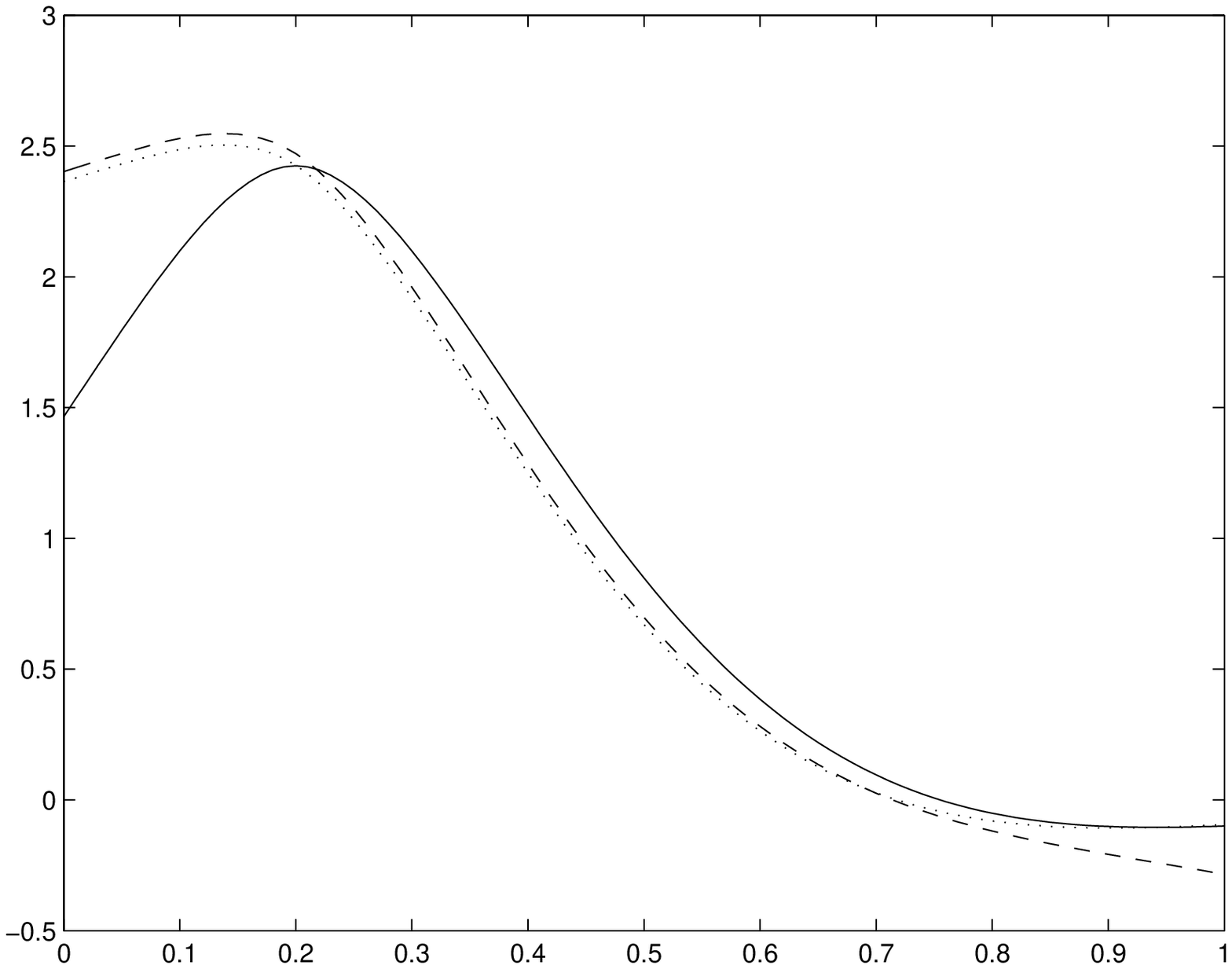}}
\resizebox{6in}{2in}{\includegraphics{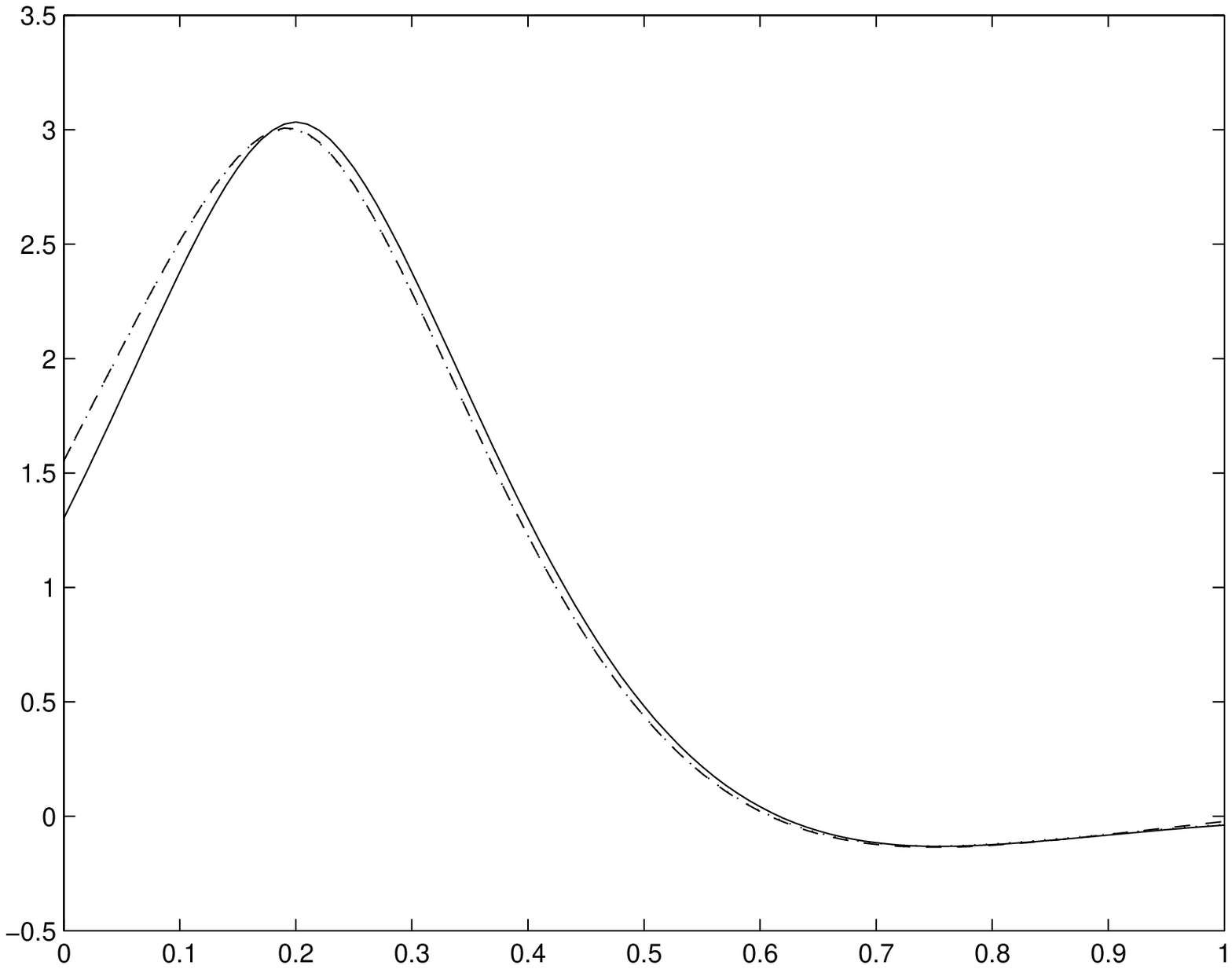}\hspace{0.2in}\includegraphics{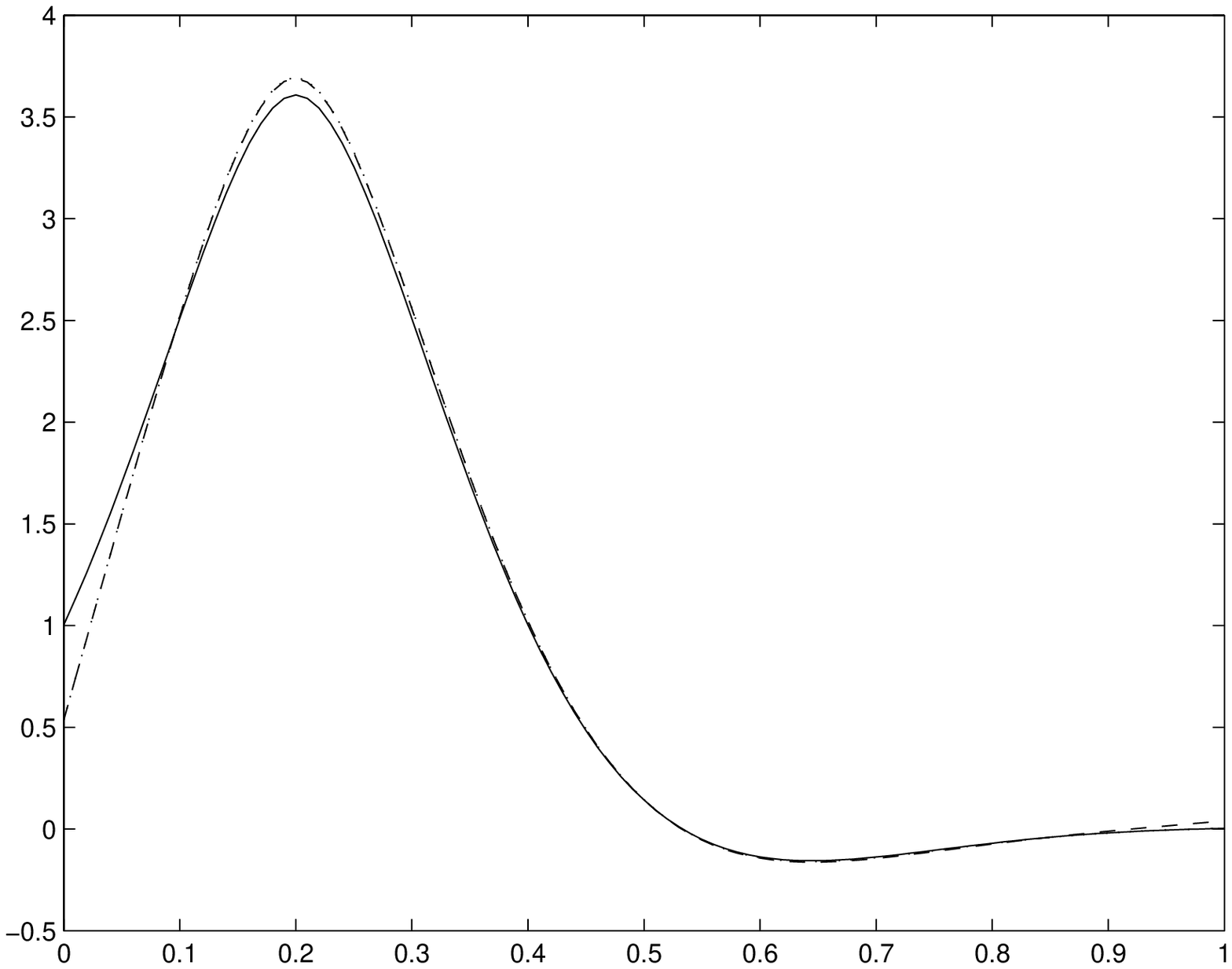}}
\end{center} \caption{The non-boundary kernel (solid), the finite-sample kernel (dashed)
and asymptotic boundary kernel (dotted) for $m = 2$ and for (a)
$\beta = 4$, (b) $\beta = 6$, (c) $\beta = 8$, (d) $\beta = 10$.
The kernels are for estimation at $x = 0.2$. }\label{fig:boundary}
\end{figure}

\begin{example} \label{example:kernel_boundary_m=2} \rm
  As an illustration, we derive the closed-form expression of the kernel near the boundary $t=0$
  for $m=2$
  and compare it with the boundary kernel established by Silverman (1984) for the smoothing splines. Since $c_0=d_0=
  \frac{1}{2\sqrt{2}}$,
  \[
     A_0 \,  = \, \begin{bmatrix} \cos(\frac{3\pi}{4}) &
     \sin(\frac{3\pi}{4}) \\ - \sin(\frac{3\pi}{4}) &
     \cos(\frac{3\pi}{4}) \end{bmatrix}, \ \ \ \ S(\omega_0 \beta t)
     \, = \, \begin{bmatrix} \cos(\frac{\beta t}{\sqrt{2}}) &
     \sin(\frac{\beta t}{\sqrt{2}}) \\ - \sin(\frac{\beta t}{\sqrt{2}}) &
     \cos(\frac{\beta t}{\sqrt{2}}) \end{bmatrix}, \ \ \ \
     \wh B^e_{11} = \begin{bmatrix} 1 & 0 \\ 1 & \sqrt{2}
     \end{bmatrix},
  \]
  we have
 \begin{equation*}
  {\bf q}(t) = e^{-\frac{\beta t}{\sqrt{2}}}
      \begin{bmatrix} \sin(\frac{\beta t}{\sqrt{2}}) \\ - \cos(\frac{\beta t}{\sqrt{2}})
       \end{bmatrix},~~~~~
   {\bf r}(t) =  \frac{\beta}{2\sqrt{2}} \, e^{-\frac{\beta s}{\sqrt{2}}}
     \begin{bmatrix} -\sin(\frac{\beta s}{\sqrt{2}}) + \cos(\frac{\beta s}{\sqrt{2}})
       \\ \sqrt{2} \cos(\frac{\beta s}{\sqrt{2}})
       \end{bmatrix}.
 \end{equation*}
Hence, the equivalent kernel near the boundary $t=0$ is
  \begin{eqnarray} \label{eqn:boundary_ker_even}
  && K_b(t,s)=  L(|t-s|) + {\bf q}^T(t) \,
   \Big(-\wh  B^e_{11} \Big)~ {\bf r}(s)  \\
 &=&  L(|t-s|) \, + \,   \frac{\beta}{2\sqrt{2}} \, e^{-\frac{\beta }{\sqrt{2}}(t+s) }
 \Big[   \cos\big( \frac{\beta }{\sqrt{2}}(t-s)
  \big) + 2 \cos\big(\frac{\beta}{\sqrt{2}} t \big) \cos\big(\frac{\beta}{\sqrt{2}} s
  \big) - \sin\big( \frac{\beta }{\sqrt{2}}(t+s) \big) \Big]. \notag
  \end{eqnarray}
 When $t=0$, since $L(|t-s|) = \frac{\beta}{2\sqrt{2}} \, e^{-\frac{\beta }{\sqrt{2}} s }
   \big[  \cos(\frac{\beta}{\sqrt{2}} s ) + \sin(\frac{\beta}{\sqrt{2}} s )
   \big]$, the boundary kernel becomes
   $\sqrt{2} \, e^{-\frac{\beta }{\sqrt{2}} s }
   \cos(\frac{\beta}{\sqrt{2}} s ), \ s\in[0, 1]$. It is interesting to notice that the boundary kernel in (\ref{eqn:boundary_ker_even}) agrees with that obtained by Silverman (1984). Figure \ref{fig:boundary} displays the non-boundary kernel, boundary kernel, and the finite sample kernel when we estimate $x=0.2$ with different choices of $\beta$, where the finite sample kernel is obtained by incorporating the terms containing $e^{-\beta\mu_k(1-t)}$ ignored
in (\ref{eqn:Ft}). Indeed, this kernel is given by
 \begin{eqnarray*}
 &   L(|t-s|) \, + \,   \frac{\beta}{2\sqrt{2}} \, e^{-\frac{\beta }{\sqrt{2}}(t+s) }
 \Big[   \cos\big( \frac{\beta }{\sqrt{2}}(t-s)
  \big) + 2 \cos\big(\frac{\beta}{\sqrt{2}} t \big) \cos\big(\frac{\beta}{\sqrt{2}} s
  \big) - \sin\big( \frac{\beta }{\sqrt{2}}(t+s) \big) \Big]
  \notag \\
 & + \frac{\beta}{2} \, e^{-\frac{\beta }{\sqrt{2}}(2-t-s) }
 \Big\{   \cos\big( \frac{\beta(1-t) }{\sqrt{2}}+\frac{\pi}{4}
  \big) \Big[\cos(\frac{\beta (1-s)}{\sqrt{2}}) -\sin(\frac{\beta (1-s)}{\sqrt{2}})
  \Big]
    + \sqrt{2} \cos(\frac{ \beta(1-t)}{\sqrt 2}) \cos(\frac{ \beta(1-s)}{\sqrt 2})
  \Big\}. \notag
  \end{eqnarray*}
  There are a good agreement between the finite-sample and asymptotic
 kernels when $\beta = 6$ and an excellent agreement when $\beta=10$.
\end{example}

%
The development of the boundary kernel for an odd $m$ is similar
and we omit the details here. For notational simplicity, let $\wh
B^o_{11}$ denote the inverse of the matrix $B^o_{11}$ defined in
(\ref{eqn:Bo_11}). we obtain the equivalent kernel for $t \ge 0$
sufficiently small (when $\beta \rightarrow \infty$) as
\begin{equation} \label{eqn:kernel_t=0_odd}
K_b(t,s) = P(|t-s|) + {\bf q}^T(t) \Big(-\wh
  B^o_{11} \Big) {\bf r}(s),
\end{equation}
where ${\bf q}(t)=\big[ (-1)^m e^{-\beta t}, {\bf q}_1(t), \ldots,
{\bf q}_{{m-1\over 2}}(t) \big]^T \in \mathbb R^m$ and ${\bf r}(s)
= [{\bf r}_0(s), {\bf r}_1(s), \ldots, {\bf r}_{m-1}(s)]^T \in
\mathbb R^m$, and
\begin{eqnarray*}
{\bf q}_\ell(t) &=&  (A_\ell)^m_{1 \bullet} e^{-\beta \mu_\ell t}
S(\omega_\ell \beta
  t )  \in \mathbb R^{1\times 2}, ~~~~ \ell =1, \ldots, {m-1\over 2},\\
 {\bf r}_j(s) &=&  \displaystyle (-1)^m c_0 \beta e^{-\beta s} + \sum^{\frac{m-1}{2}}_{k=1}  (-A_k)^{-(m-j)}_{1 \bullet}
    \beta e^{-\beta \mu_k s} S(\omega_k \beta
  s ) \begin{bmatrix} c_k \\ d_k \end{bmatrix}, ~~~ j=0, \ldots,
  m-1,
\end{eqnarray*}
where the coefficients $c_k, d_k$ satisfy the linear equation
(\ref{eqn:coefficient_odd}).

%
\section{Extensions to unequally spaced data and multivariate smoothing}\label{sec:diss}

We have so far focused on the equally spaced design case and
equally spaced knots. When the design is not equally spaced, one
can use the ideas of Stute (1984) and Li and Ruppert (2008). In
specific, assume that $x_i$'s are in $(a, b)$. Find a smoothing
monotone function $\Upsilon$ such that $\Upsilon(x_i)=i/n$ from $(a, b)$ to
$(0,1)$. We use the $P$-spline smoothing to fit $(i/n, y_i)$, and
thus the regression function is give by $f\circ \Upsilon^{-1}$. We place
knots at sample quantiles so that there are equal numbers of data
points between consecutive knots.

The univariate $P$-splines can be naturally extended to multivariate $P$-splines (Marx and Eilers, 2005). The asymptotic properties can be studied along the same line. Consider the problem of estimating the $\nu$ dimensional function $f(t_1, \ldots, t_\nu)$ from noisy observations $y_i = f(t_{1i}, \ldots, t_{\nu i})+ \epsilon_i$, $i =1,\ldots, n$. The $P$-spline model approximates $f$ by
$$f(t_1, \ldots, t_\nu) = \sum_{k_1=1}^{K_{1n}+p_1}\cdots \sum_{k_\nu=1}^{K_{\nu n}+p_\nu} b_{k_1, \ldots, k_\nu}B_{k_1}^{[p_1]}(t_1) \cdots B_{k_\nu}^{[p_\nu]}(t_\nu).$$
The spline coefficient $\hat{b}$ subject to the difference penalty are chosen to minimize
\begin{eqnarray*}
&&\sum_{i=1}^n \big[ y_i - \sum_{k_1=1}^{K_{1n}+p_1}\cdots \sum_{k_d=1}^{K_{dn}+p_d} b_{k_1, \ldots, k_d}B_{k_1}^{[p_1]}(t_{1i}) \cdots B_{k_d}^{[p_d]}(t_{di}) \big]^2 \\
&&~~~~~~~~~~~~~~~~~~~~~~~~~~~~~~~~~~ + \lambda^* \sum_{k_1=m_1+1}^{K_{1n}+p_1}\cdots \sum_{k_d=m_d+1}^{K_{dn}+p_d}\big[\Delta^{m_1, m_2, \ldots, m_d}b_{k_1, k_2,\ldots, k_d}\big]^2,
\end{eqnarray*}
where the difference operator for $d$ dimensional case is defined as follows:
\begin{eqnarray*}
\Delta^{0, \ldots, 0}b_{k_1, \ldots, k_\nu} &=& b_{k_1, \ldots, k_\nu}, ~~ k_1=1, \ldots, K_{1n}+p_1, \ldots, k_\nu=1,\ldots, K_{\nu n}+p_\nu,\\
\Delta^{m_1, m_2, \ldots, m_\nu}b_{k_1, k_2,\ldots, k_\nu} &=& \Delta^{m_1-1, m_2, \ldots, m_\nu}b_{k_1, k_2,\ldots, k_\nu}-\Delta^{m_1-1, m_2, \ldots, m_\nu}b_{k_1-1, k_2,\ldots, k_\nu}\\
&=& \Delta^{m_1, m_2-1, \ldots, m_\nu}b_{k_1, k_2,\ldots, k_\nu}-\Delta^{m_1, m_2-1, \ldots, m_\nu}b_{k_1, k_2-1,\ldots, k_\nu}\\
&=& \cdots\\
&=& \Delta^{m_1, m_2, \ldots, m_\nu-1}b_{k_1, k_2,\ldots, k_\nu}-\Delta^{m_1, m_2, \ldots, m_\nu-1}b_{k_1-1, k_2,\ldots, k_\nu-1}.
\end{eqnarray*}
For example, consider a two dimensional difference operator when $k_1=1$ and $k_2=2$:
\begin{eqnarray*}
\Delta^{1,2}b_{ks} &=& \Delta^{0,2}b_{ks} - \Delta^{0,2}b_{k-1,s}\\
&=& [b_{ks}-2b_{k,s-1}+b_{k,s-2}]- [b_{k-1,s}-2b_{k-1,s-1}+b_{k-1,s-2}].
\end{eqnarray*}
Let $X$ be the $n\times \{\Pi_{j=1}^\nu(K_{jn}+p_j)\}$  matrix with $(i,j)$th entry equal to $B_{k_1}^{[p_1]}(t_{1i}) \cdots B_{k_\nu}^{[p_\nu]}(t_{di})$. Define $D$ as the $\{\Pi_{j=1}^\nu(K_{jn}+p_j-m_j)\}\times \{\Pi_{j=1}^\nu(K_{jn}+p_j)\}$ differencing matrix satisfying
$$D b = \left (\begin{array}{c}
\Delta^{m_1,\ldots,m_\nu}b_{m_1+1, \ldots, m_\nu+1}\\
\vdots \\
\Delta^{m_1,\ldots,m_\nu}b_{K_{1n}+p_1,\ldots, K_{\nu n}+p_\nu}\\
\end{array}
\right).$$
The optimality condition is given by
\begin{equation}
(X^TX + \lambda^* D^TD)\hat{b} = X^Ty.
\end{equation}
Note that $D = D_{m_1}\otimes D_{m_2} \otimes \cdots \otimes
D_{m_\nu}$ and $D^TD =  D_{m_1}^TD_{m_1}\otimes D_{m_2}^TD_{m_2}
\otimes \cdots \otimes D_{m_d}^TD_{m_d}$, where ``$\otimes$''
represents the Kronecker product.  We may go though the same
procedure as described in this paper. The multivariate $P$-spline
smoothing is asymptotically equivalent to kernel smoothing and the
equivalent kernel is the Green's function corresponding to the
partial differential equation (PDE):
\begin{equation}
(-1)^{m_1+\cdots+m_d} \alpha {\partial^{2m_1+\cdots +2m_d}\over \partial t_1^{2m_1}\cdots\partial t_d^{2m_d}}F(t_1,\ldots, t_d)
+F(t_1,\ldots, t_d) = G(t_1, \ldots, t_d),
\end{equation}
subject to the boundary conditions:
\begin{eqnarray*}
&&{\partial^{k_1+\cdots +k_d}\over \partial t_1^{k_1}\cdots\partial t_d^{k_d}}F(t_1,\ldots, t_d) = 0, \mbox{~if any }t_i=0, ~~k_i=0,\ldots,m_i-1,\\
&&{\partial^{k_1+\cdots +k_d}\over \partial t_1^{k_1}\cdots\partial t_d^{k_d}}F(t_1,\ldots, t_d) ={\partial^{k_1+\cdots +k_d}\over \partial t_1^{k_1}\cdots\partial t_d^{k_d}}G(t_1,\ldots, t_d), \mbox{~if any }t_i=1, ~~k_i=0,\ldots,m_i-1.
\end{eqnarray*}
Further study of this issue is beyond the scope of this paper and
shall be reported in a future publication.


\end{document}